\documentclass[11pt]{article}
\usepackage{amsmath,amssymb,amsthm,amscd,esint}
\usepackage{url,endnotes,hyperref}
\usepackage{tikz-cd}

\numberwithin{equation}{section}

\newtheorem{theorem}{Theorem}[section]
\newtheorem{lemma}[theorem]{Lemma}
\newtheorem{definition}[theorem]{Definition}

\theoremstyle{corollary}
\newtheorem{corollary}[theorem]{Corollary}
\theoremstyle{conjecture}
\newtheorem{conjecture}{Conjecture}

\theoremstyle{assumption}

\theoremstyle{proposition}
\newtheorem{proposition}[theorem]{Proposition}
\theoremstyle{remark}

\numberwithin{equation}{section}
\everymath{\displaystyle}

\newcommand{\reg}{\operatorname{reg}}

\newcommand{\PP}{\mathbb P}

\newcommand{\Rm}{\operatorname{Rm}}

\newcommand{\R}{\mathbb R}\newcommand{\Q}{\mathbb Q}
\newcommand{\Z}{\mathbb Z}\newcommand{\C}{\mathbb C}
\newcommand{\A}{\mathbb A}

\newcommand{\Ric}{\operatorname{Ric}}

\newcommand{\Bl}{\operatorname{Bl}}

\begin{document}

\title{Collapsed Finite Time Singularities of the Kähler-Ricci Flow on Complex Surfaces are of Type I}
\author{}
\date{}
\maketitle

\vspace{-3em}
\begin{center}
\Large
\begin{tabular}{cc}
Tongxin Xu$^*$ & \qquad Zhenlei Zhang$^\dagger$
\end{tabular}
\end{center}
\vspace{0.1em}

\begin{abstract}
We prove that any finite-time collapsing Kähler-Ricci flow on compact Kähler surfaces develops a Type I singularity. Together with the previous results, this implies that any finite time singularity of the Kähler-Ricci flow on compact Kähler surfaces is of Type I.
\end{abstract}
\maketitle
\tableofcontents

\section{Introduction}

Ricci flow was introduced by Hamilton  as an evolution equation for Riemannian metrics in 1980s \cite{hamilton1982three}. In this paper, we mainly consider the Kähler-Ricci flow. Let $(M,\omega_{0})$ be a compact Kähler manifold of complex
dimension $n$, and let $\omega(t)$ be the solution of the unnormalized
Kähler--Ricci flow
\begin{equation*}
    \frac{\partial}{\partial t}\omega(t)
    =
    -\Ric(\omega(t)),
    \qquad
    \omega(0)=\omega_{0},
\end{equation*}
on its maximal time interval $[0,T)$. Throughout this paper, we assume
that the maximal existence time is finite, namely $T<\infty$. The flow is
said to develop a Type~I singularity at time $T$ if
\begin{equation}\label{eq:type-I}
    \sup_{M}|\Rm(\omega(t))|_{\omega(t)}
    \leq \frac{C}{T-t}
\end{equation}
for some uniform constant $C<\infty$ and every $t\in[0,T)$.

A fundamental problem in the study of finite-time singularities is to
understand the geometry arising under parabolic rescaling. By the work
of Naber and Enders--Müller--Topping, suitable blow-ups based at a
Type~I singular point subconverge to a complete nonflat Ricci shrinker with bounded curvature \cite{Naber,EMT}. In the
Kähler setting, the limit is naturally a Kähler-Ricci shrinker.

A folklore conjecture asserts that finite-time singularities of the
K\"ahler--Ricci flow on compact K\"ahler surfaces are always of Type~I.

\begin{conjecture}\label{conj}
Finite time singularities of Kähler-Ricci flow on compact Kähler surfaces are all type I singularities.
\end{conjecture}

This conjecture may be decomposed into three components.

    (1) Finite time extinction: $\lim\nolimits_{t\nearrow T}\operatorname{diam}(M,\omega(t))=0$, in this case $M$ is a Fano surface with $[\omega_0]= \lambda c_1(M)$ \cite[Theorem 1.4]{TosattiZhang2018FiniteTimeCollapsing}, the flow develops type I singularity and the correspond tangent flow are unique compact Kähler-Ricci shrinker \cite{Tia90,Koi90,WZ04,ST08,TZ07,CW12,BM87,TZ00}.

    (2) Finite time noncollapsing: $\lim\nolimits_{t\nearrow T}\operatorname{Vol}(M,\omega(t))>0$, in this case such a singularity corresponds to contract $(-1)$-curves \cite{SW}, Conlon--Hallgren--Ma prove such singularities are of Type I \cite{CHM25}, the correspondence tangent flow is the unique FIK shrinker \cite{CCD}.

    (3) Finite time collapsing: $\lim\nolimits_{t\nearrow T}\operatorname{Vol}(M,\omega(t))=0$ and $\limsup\nolimits_{t\nearrow T}\operatorname{diam}(M,\omega(t))>0$, in this case $M$ is brational to a ruled surface \cite{SWL}, \cite{XZ,JS} prove the Type I curvature estimate near regular fibers and identify the corresponding tangent flow $\mathbb{P}\times \mathbb{C}$.

In the volume collapsing case, it is equivalent to study the holomorphic $\PP^1$-fibration $p:M\to \Sigma$ over a compact Riemannian surface with $[\omega_0]-2\pi Tc_1(M)=\pi^*[\Sigma]$\cite[Theorem 1.4]{TosattiZhang2018FiniteTimeCollapsing}. Let $\Delta\subseteq \Sigma$ denote the set of points whose fibers are not biholomorphic
to $\PP^1$. We establish the following results:
\begin{theorem}\label{main}
Let $(M,\omega(t))_{t\in[0,T)}$ be a finite time Kähler-Ricci flow on compact Kähler surface $M$. If $\lim\nolimits_{t\nearrow T}\operatorname{Vol}(M,\omega(t))=0$, then $\omega(t)$ develops a finite time Type I singularity at $t=T$.
\end{theorem}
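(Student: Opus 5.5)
By \cite[Theorem 1.4]{TosattiZhang2018FiniteTimeCollapsing} and \cite{SWL}, the collapsing hypothesis reduces the statement to a holomorphic $\PP^1$-fibration $p:M\to\Sigma$ with $[\omega_0]-2\pi Tc_1(M)=\pi^*[\Sigma]$. Let $\Delta\subseteq\Sigma$ be the set of points whose fibers are not biholomorphic to $\PP^1$. Over $\Sigma\setminus\Delta$ the Type I estimate $|\Rm|\le C/(T-t)$ is already known locally uniformly from \cite{XZ,JS}. The task is therefore to (a) make that estimate uniform up to $\Delta$, and (b) handle the singular fibers over the finitely many points of $\Delta$.

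Since the fibration is a holomorphic $\PP^1$-fibration over a curve, I would first use the structure of the family: after removing $(-1)$-curves, $M$ is a geometrically ruled surface, so $p$ is locally trivial. I would argue that $\Delta=\emptyset$ in the collapsing case. Indeed, a singular fiber would contain a $(-1)$-curve $E$ with $[\omega(t)]\cdot E=[\omega_0]\cdot E-2\pi t\, c_1(M)\cdot E$. Because $[\omega_T]=\pi^*[\Sigma]$ vanishes on fibers, and hence on every fiber component, this class would hit zero on $E$ exactly at time $T$. Each component must shrink at the same time $T$, and that forces compatible intersection numbers. If this argument does not rule out singular fibers, I would treat them separately: near $p^{-1}(\Delta)$ the flow contracts both the fiber direction and the $(-1)$-curves simultaneously. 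There I would combine the \cite{CHM25} Type I argument for contracted $(-1)$-curves with the fiber-collapse estimate.

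Assuming local triviality, the core step is a global Type I bound on the product-like region. I would follow the \cite{XZ,JS} strategy. First establish the $C^0$ and $C^1$ type estimates
\[
C^{-1}\big(\pi^*\omega_\Sigma+(T-t)\omega_F\big)\le\omega(t)\le C\big(\pi^*\omega_\Sigma+(T-t)\omega_F\big),
\]
with $\omega_F$ a smooth fiberwise Fubini--Study reference form. Here the point is that compactness of $\Sigma$ makes all constants uniform. Then parabolically rescale by $(T-t)^{-1}$ around any point. After rescaling, the reference metric becomes a bounded-geometry model close to $\C\times\PP^1$ on parabolic balls of fixed size. Higher-order estimates for the complex Monge--Ampère flow (Evans--Krylov plus Schauder on the rescaled flow, or a Shi-type local curvature estimate) then yield $|\Rm|\le C$ on the rescaled flow, which is exactly Type I.

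I expect the main obstacle to be the uniform two-sided metric equivalence above. In particular, the lower bound in the fiber direction requires a parabolic Schwarz lemma, or a Chern--Lu type inequality for $\operatorname{tr}_{\omega}\pi^*\omega_\Sigma$ combined with a fiberwise volume estimate $\omega^2\approx(T-t)\,\pi^*\omega_\Sigma\wedge\omega_F$. The bound on $\operatorname{tr}_{\omega}\pi^*\omega_\Sigma$ comes from the maximum principle. The fiber-direction bound would come from the volume form estimate via the potential $\dot\varphi$ bounds. If singular fibers persist, the corresponding difficulty is gluing these estimates to the \cite{CHM25} bounds across the neck region, and this is where I would concentrate effort.
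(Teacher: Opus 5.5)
Your proposal has a genuine gap, and it is exactly where the theorem has content. The regular-fiber estimate is already available from \cite{XZ,JS}, so everything new concerns $p^{-1}(\Delta)$, and there you give no mechanism.

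First, your argument that $\Delta=\emptyset$ is wrong. Vanishing of $[\omega_0]-2\pi Tc_1(M)$ on a fiber component $E$ only says $\int_E\omega_0=2\pi T\,c_1(M)\cdot E$. A fiber made of two $(-1)$-curves, each with $c_1(M)\cdot E=1$, satisfies this with no contradiction. For example, take $M=\Bl_{\mathrm{pt}}(\PP^1\times\Sigma)$ with any K\"ahler class giving both components of the special fiber area $2\pi T$ and giving a section enough area. Then $[\omega_0]-2\pi Tc_1(M)=p^*[\omega_\Sigma]$, and this is precisely the configuration of Lemma~\ref{lem:singular-fiber}. Also, local triviality holds for the contracted surface $M_0$, not for $M$, which is where the flow runs.

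Second, your product-model strategy does not extend to the singular fibers. There is no fiberwise Fubini--Study form $\omega_F$ on a reducible fiber. Near the node of $F_q$, in coordinates with $p=zw$, consider the rescaled reference form $\frac{1}{T(T-t)}\bigl((T-t)\omega_0+t\,p^*\omega_\Sigma\bigr)$. After substituting $(z,w)=\sqrt{T-t}\,(Z,W)$, it is approximately $(T-t)$ times a fixed non-flat metric of the form $h+c\,|d(ZW)|^2$ with $h$ constant. So even after rescaling it has curvature of order $(T-t)^{-1}$. The bounded-geometry input for your Evans--Krylov/Schauder step is therefore missing there, and those estimates would give at best $|\Rm|\lesssim(T-t)^{-2}$ near the node.

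The fallback of combining \cite{CHM25} with a fiber-collapse estimate and gluing across a neck also does not apply. \cite{CHM25} treats a $(-1)$-curve contracted while the volume stays positive, modelled on the FIK shrinker. Here $C_1$ and $C_2$ shrink at the same rate $\sqrt{T-t}$ as the fibers, so no neck scale separates them, and the relevant model is the non-product BCCD shrinker $\Bl_p(\PP^1\times\C)$.

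What is missing is a way to identify the geometry at scale $\sqrt{T-t}$ around $F_q$ without a reference model. The paper does this with Bamler's $\mathbb F$-compactness:
\begin{itemize}
\item Tangent flows based at points of $F_q$ are a priori orbifold K\"ahler--Ricci shrinkers. \cite{HZ26} gives them a $\PP^1$-fibration over $\C$ with all orbifold points in the central fiber.
\item A topological argument shows they are smooth. It plays the rank bound of Proposition~\ref{prop:k} (against $b_2(V)\le2$) and the Chern-number replacement $b_2(W_i)=\ell+A^2$ of Lemma~\ref{lem:replacement} against the blowup count of the resolved central fiber.
\item \cite{LW26} then leaves only the cylinder or BCCD.
\item The cylinder is excluded by keeping heat-kernel centers near $F_q$ (Lemma~\ref{lem:visible-subsequence}). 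A self-intersection-$0$ sphere of the cylinder would then deform into a component of $F_q$, which is impossible since both components are $(-1)$-curves.
\item Equality of entropy across tangent flows forces BCCD at every scale.
\end{itemize}
The Type I bound near $F_q$ then follows from smooth convergence to the bounded-curvature BCCD model together with Bamler's $P^*$-neighborhood containment. For points near but off $F_q$ it follows from the secondary tangent-flow argument of Proposition~\ref{prop:typeI-exterior}. That argument, not compactness of $\Sigma$, is what makes the estimate of \cite{XZ} uniform up to $\Delta$.

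Without some such identification of the blow-up limits at the singular fibers, your outline covers only the locally trivial case, which \cite{XZ,JS} already handle.
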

We also identify the correspond tangent flow on singular fiber, the candidate tangent shrinker was constructed by \cite{BCCD}.
\begin{corollary}
For any $x_0\in p^{-1}(\Delta)$ and any sequence $\tau_i\searrow0$, the rescaled flows $(M,\tau_i^{-1}g(T+\tau_i t),J,x_0)_{t\in[-\tau_i^{-1}T,0)}$ subconverge in the pointed $C^{\infty}$ Cheeger-Gromov topology to the BCCD shrinker $\operatorname{Bl}_{p}(\mathbb{P}^1\times \mathbb{C})$.
\end{corollary}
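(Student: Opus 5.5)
Granting Theorem~\ref{main}, the curvature bound \eqref{eq:type-I} holds. I would then argue in the standard way via Type~I blow-up theory and a classification of the possible limits near a singular fiber.

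\emph{Compactness.} Let $g_i(t)=\tau_i^{-1}g(T+\tau_i t)$. By \eqref{eq:type-I}, $|\Rm(g_i(t))|_{g_i(t)}\le C/|t|$ on $[-\tau_i^{-1}T,0)$, with all derivatives controlled by Shi's estimates. Perelman's no-local-collapsing theorem gives a uniform injectivity radius bound at $(x_0,-1)$. Hamilton's compactness theorem then yields subconvergence in the pointed $C^\infty$ Cheeger--Gromov sense to a complete ancient Kähler--Ricci flow $(M_\infty,g_\infty(t),J_\infty,x_\infty)$ on $(-\infty,0)$. Since $x_0$ is a singular point (every point becomes singular in the collapsing case, since the fibers shrink at rate $\sqrt{T-t}$), the results of Naber and Enders--Müller--Topping \cite{Naber,EMT}, using Perelman's reduced volume based at $(x_0,T)$, show that the limit is a nonflat gradient shrinking Kähler--Ricci soliton with bounded curvature, i.e.\ $g_\infty(t)=|t|\phi_t^*g_\infty(-1)$.

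\emph{Topology.} I would first locate the limit topologically. The fibers have area $2\pi\,(T-t)\,c_1(M)\cdot[F]$, which is comparable to $T-t$, so after rescaling they have bounded size. Meanwhile $\pi^*\omega_\Sigma/\tau_i$ blows the base up to $\mathbb C$. Near $x_0\in p^{-1}(\Delta)$ the singular fiber is a chain of rational curves; for a suitable model it is two $(-1)$-curves meeting at a point, and after blowing down one of them we get a $\PP^1$-bundle. Using the parabolic Schwarz lemma and the estimate $\omega\ge c\,p^*\omega_\Sigma$, the map $p$ composed with the rescaling of the base gives a holomorphic map $M_\infty\to\mathbb C$ whose generic fiber is $\PP^1$ and which has one reducible fiber, namely the limit of the components of the singular fiber. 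The components of the singular fiber, with areas of order $T-t$, persist as compact curves in the limit. This should show that $M_\infty$ is biholomorphic to $\Bl_p(\PP^1\times\C)$. If the singular fiber has more components, I would repeat the argument component by component, or invoke the structure obtained in the proof of Theorem~\ref{main}.

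\emph{Identification.} The final step is uniqueness of the soliton: show that any complete shrinking gradient Kähler--Ricci soliton with bounded curvature on $\Bl_p(\PP^1\times\C)$ is the BCCD shrinker \cite{BCCD}. I expect this to be the main obstacle. My plan is to use the classification of complete Kähler shrinkers on surfaces with bounded curvature (Cifarelli--Conlon--Deruelle type results, which show such shrinkers are toric with a determined soliton vector field) together with the fact that the limit is asymptotic to the cylinder $\PP^1\times\C$ at infinity. That asymptotic follows from the Type~I regular-fiber analysis \cite{XZ,JS}. Granting this uniqueness, every subsequential limit is the BCCD shrinker, which proves the statement.
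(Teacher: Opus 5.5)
Your compactness step is fine and is what the paper does (Type I bound plus \cite{EMT}). The gap is the ``Topology'' step. It carries essentially all the content of the corollary, and it is asserted rather than proved.

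First, with $w$ a local coordinate on $\Sigma$ centered at $q$, the Schwarz lemma makes $\tau_i^{-1/2}\,w\circ p$ uniformly Lipschitz for $g_i$, so it has a holomorphic limit on $M_\infty$. But you give no argument that this limit is nonconstant, proper, has $\PP^1$ as general fiber, or has a reducible fiber. Each of these needs two-sided control of $\omega(t)$ in the base directions near $F_q$, which you have not established.

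Second, and more seriously, the sentence ``the components of the singular fiber persist as compact curves in the limit'' is exactly what separates the BCCD shrinker from the cylinder $\PP^1\times\C$. The cylinder also fibers holomorphically over $\C$ with $\PP^1$ fibers, and it is the tangent flow at regular fibers. You supply no mechanism for this persistence. Nothing keeps $C_1,C_2$ at bounded rescaled distance from $x_0$ (bounded area alone does not bound diameter), and nothing shows that the self-intersection $-1$ survives in the limit.

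Conversely, what you call the main obstacle is not one. The list of possible noncompact limits and the uniqueness of the shrinker on $\Bl_p(\PP^1\times\C)$ are supplied by \cite[Theorem 1.1]{LW26} and \cite[Theorem B]{CCD}, which the paper simply cites.

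The paper argues as follows. The rescaled volume diverges, so the limit is noncompact, and the classification leaves only the cylinder or the BCCD shrinker. To exclude the cylinder, take the fiber $C=\PP^1\times\{o\}$ through the base point, which has $C^2=0$. Deform it by \cite[Corollary 2.3]{CCD} into embedded holomorphic spheres $D_i\ni x_0$ in $M$ with $D_i^2=0$ and $\int_{D_i}\omega(T-\tau_i)=O(\tau_i)$. The Schwarz estimate $p^*\omega_\Sigma\le C\omega(t)$ then forces $p|_{D_i}$ to be constant. So $D_i\subset F_q$ is $C_1$ or $C_2$, contradicting $C_j^2=-1$.

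Your forward idea can also be made rigorous, but by a different argument than the one you sketch. The Type I bound and Perelman's noncollapsing give bounded geometry for $(T-t)^{-1}g(t)$. The monotonicity argument in the proof of Corollary 1.3 then bounds the rescaled diameters of $C_1$ and $C_2$, each of rescaled area $2\pi$. They therefore stay in a fixed ball about $x_0$ and subconverge, by Gromov compactness, to compact holomorphic cycles in $M_\infty$ whose classes have square $-1$. This is impossible on $\PP^1\times\C$, where the intersection form vanishes.

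In either version, what is needed is the classification plus an exclusion of the cylinder. A reconstruction of the complex structure of $M_\infty$ from a limiting fibration is not required.
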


Combining (1), (2), and (3) with our Theorem \ref{main}, Conjecture 1 is solved. When our manuscript was completed, we noticed the work \cite{CCHZ} on arXiv, where they also proved the above results. Our proof is different and was obtained independently.

In higher dimension, this conjecture is false \cite{LTZ24,MT23}. Related results on finite-time singularities can be found in (\cite{SW,JianSongTian2023FiniteTimeSingularities,HJST,CT,LZ26,chen2026k,HZ26},etc.)

As a direct application of Theorem \ref{main}, we obtain the optimal
diameter collapse rate for all fibers of $p:M\to\Sigma$.
\begin{corollary}
Under the same assumptions in Theorem 1.1, there exist uniform constants $c,C>0$ such that
\[
c\sqrt{T-t}
\le \operatorname{diam} (F_q,{g(t)}|_{F_q})
\le C\sqrt{T-t}
\]
for any $q\in\Sigma$ and $0\le t<T$.
\end{corollary}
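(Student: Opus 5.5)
The diameter bound is a consequence of Theorem \ref{main} together with the volume behaviour of fibers, and the two sides will be handled separately. Throughout, $[\omega(t)]=[\omega_0]-2\pi t c_1(M)$, and for any fiber $F_q$ (smooth or singular, viewed as a curve class) we have $\int_{F_q}\omega(t)=\int_{F}\omega_0-2\pi t\, c_1(M)\cdot F = 2\pi(T-t)\,(-K_M\cdot F)= 4\pi (T-t)$, since $[\omega_0]-2\pi Tc_1(M)=\pi^*[\Sigma]$ vanishes on fibers and $-K_M\cdot F=2$. So every fiber, and every irreducible component $E$ of a singular fiber, has area comparable to $T-t$ (for a component, $\int_E\omega(t)=2\pi(T-t)(-K_M\cdot E)$ with $-K_M\cdot E\ge 1$, since components of singular fibers of a $\PP^1$-fibration are rational curves of self-intersection $\le -1$; one checks this is positive).

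\emph{Upper bound.} By Theorem \ref{main}, $|\Rm(g(t))|\le C/(T-t)$. I would combine this with the known uniform fiber collapsing (the diameters of fibers tend to zero uniformly, e.g.\ from the $C^0$ convergence of $\omega(t)$ to $p^*\omega_\Sigma$-type limits in \cite{TosattiZhang2018FiniteTimeCollapsing,SWL}) and a rescaling argument: suppose $\operatorname{diam}(F_{q_i},g(t_i))/\sqrt{T-t_i}\to\infty$. Rescale by $(T-t_i)^{-1}$; the rescaled flows have bounded curvature on $[-1,0]$-type intervals, and by Perelman's $\kappa$-noncollapsing at scale $\sqrt{T-t}$ they subconverge (the tangent flow analysis of \cite{XZ,JS} and the corollary above) to $\PP^1\times\C$ or $\Bl_p(\PP^1\times\C)$, whose fibers have bounded diameter. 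Alternatively, more directly: each fiber component is a holomorphic curve, hence minimal, so by Gauss equation its intrinsic curvature is at most the ambient $C/(T-t)$; a connected chain of at most $N$ (a bound depending on $M$) such curves, each with area $\le C(T-t)$ and Gauss curvature $\le C/(T-t)$, has bounded intrinsic diameter after scaling, via monotonicity/noncollapsing: a minimal surface in a region where $|\Rm|\le C/(T-t)$ and injectivity radius $\ge c\sqrt{T-t}$ (from $\kappa$-noncollapsing plus curvature bound, Cheeger–Gromov–Taylor) satisfies area of intrinsic $c\sqrt{T-t}$-balls $\ge c'(T-t)$ by monotonicity. Covering by disjoint such balls, the number of balls is bounded, giving diameter $\le C\sqrt{T-t}$. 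I would take this monotonicity route as the main argument.

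\emph{Lower bound.} If $\operatorname{diam}(F_q)\le \epsilon\sqrt{T-t}$, then since the ambient curvature bound gives, for holomorphic (minimal) curves, Gauss curvature $\le C/(T-t)$, the area of an intrinsic ball of radius $r\le c\sqrt{T-t}$ is at most $C r^2$ by comparison, so $\operatorname{Area}(F_q)\le C\epsilon^2(T-t)$, contradicting $\operatorname{Area}=4\pi(T-t)$ for small $\epsilon$. For singular fibers apply this to one component and use that the fiber diameter dominates the component's extrinsic diameter; to avoid intrinsic/extrinsic issues, use the ambient metric ball: $F_q\subset B_{g(t)}(x,\epsilon\sqrt{T-t})$, and the area of a minimal surface in a ball where $|\Rm|\le C/(T-t)$ and the radius is below the convexity scale is bounded by $C\epsilon^2(T-t)$ times its multiplicity (closed minimal surface cannot lie in a small convex ball at all, by the maximum principle for convex distance functions). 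Indeed, the cleanest contradiction: a compact minimal surface cannot be contained in a strongly convex geodesic ball, and $\epsilon\sqrt{T-t}$ is below the convexity radius $c\sqrt{T-t}$ given by the curvature and injectivity bounds.

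The main obstacle is the injectivity radius/convexity scale $c\sqrt{T-t}$, which requires $\kappa$-noncollapsing plus the Type I curvature bound of Theorem \ref{main}; once that is in place both bounds follow from minimal surface monotonicity and convexity.
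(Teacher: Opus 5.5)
Your main line is the paper's proof. You rescale to $(T-t)^{-1}g(t)$ and get bounded geometry from Theorem 1.1, Perelman's noncollapsing and Cheeger--Gromov--Taylor. For the lower bound you use the convexity of $\tfrac12 d^2$ below the injectivity scale to show that a closed holomorphic, hence minimal, curve cannot lie in a small ball. For the upper bound you use intrinsic-ball monotonicity, the area bound $\int_E\omega(t)\le 4\pi(T-t)$, and a covering along a minimizing geodesic in $E$.

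The side remarks you set aside are not needed and should be dropped. In particular, the claimed upper bound $Cr^2$ on intrinsic-ball area from an upper Gauss-curvature bound has the comparison backwards. Also, $-K_M\cdot E=1$ comes from the components being $(-1)$-curves (Lemma 2.1), not from $E^2\le -1$.
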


\paragraph{Outline of the proof.}
The proof has two parts: smoothness of tangent flows and
the Type I estimate.

In the first part, Bamler's compactness theory gives orbifold tangent shrinkers.
Using \cite{HZ26}, we show that the tangent shrinker admits a $\mathbb P^1$-fibration over $\mathbb C$,
with all singularities in the central fiber.
We compare the topology of the local regions near these
singularities with their minimal resolutions.
The rank bound from the original fiber neighborhood and
the blowup description of this fibration give
a contradiction. Thus the tangent shrinker is smooth. The classification \cite{BCCD,LW26} leaves the cylinder and BCCD. We choose scales at which the fiber remains near a
heat-kernel center. A cylinder limit would give a
trivial self-intersection holomorphic curve in the original singular
fiber, contradicting the self-intersections of its
components. This gives one BCCD tangent.
Equality of entropy for tangents of the same kernel
then excludes the cylinder at every other scale.

For the Type I estimate, we first extend the estimate of \cite{XZ} to backward $P^*$-neighborhoods which avoid the singular fibers. We use the secondary
tangent-flow argument of \cite{BCCD}. A local topological
lemma gives smoothness of the secondary limit;
a compact curve and the local cohomology formula
give the contradiction. To obtain the global estimate, the two $(-1)$-curves
of a BCCD tangent deform to the two components of the
original singular fiber. Smooth convergence and bounded
curvature of the model control neighborhoods of this
fiber. Bamler's $P^*$-comparison estimates, together
with the exterior estimate, rule out Type II blowup.
We finally identify the tangent flow on
the singular fibers as the BCCD shrinker.

\paragraph{Use of AI} The authors made substantial use of ChatGPT-6 Astra for the arguments in Section 3.2 when developing the proofs of this paper. The authors have checked all mathematical arguments in the paper and take full responsibility for its content and correctness.

\section{Preliminaries}
\subsection{$\mathbb{P}^1$-fibration and collapsing fibers}

Let $p:M\to \Sigma$ be a holomorphic fibration from a compact Kähler surface onto a compact Riemannian surface $\Sigma$ with general fiber biholomorphic to $\mathbb P^1$. We consider the finite time Kähler-Ricci flow $(M,g(t))_{t\in[0,T)}$ with $[\omega_0]-2\pi Tc_1(M)=p^*[\omega_{\Sigma}]$. The homology class assumptions strongly restricts the singular fibers. 
\begin{lemma}\label{lem:singular-fiber}\cite[Lemma 2.1]{XZ}
Every singular fiber of $p$ is the union of two transversely
intersecting $(-1)$-curves.
\end{lemma}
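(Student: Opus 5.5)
The plan is to use the cohomological constraint $[\omega_0]-2\pi T c_1(M)=p^*[\omega_\Sigma]$, together with adjunction and Zariski's lemma on fibers. Write $\alpha_0=[\omega_0]$, which is a Kähler class. Let $F=\sum_i m_i C_i$ be a singular fiber, with $C_i$ its irreducible components and $m_i\ge 1$ their multiplicities.

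Since $p^*[\omega_\Sigma]\cdot C_i=0$ for every component of a fiber, the hypothesis gives $2\pi T\, c_1(M)\cdot C_i=\alpha_0\cdot C_i>0$, so $-K_M\cdot C_i>0$ for every component. On the general fiber $\mathbb P^1$ we have $F^2=0$, and adjunction gives $K_M\cdot F=-2$. Therefore
\[
2=\sum_i m_i(-K_M\cdot C_i),
\]
where each term $-K_M\cdot C_i$ is a positive integer.

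Since $F$ is singular, it is either reducible or irreducible but non-reduced or non-smooth. I treat these cases in turn.

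\emph{Irreducible case.} If $F=mC$ with $m\ge 2$, then $-K_M\cdot C=2/m$ forces $m=2$ and $-K_M\cdot C=1$. Since $C^2=F^2/m^2=0$, adjunction gives $2p_a(C)-2=K_M\cdot C+C^2=-1$, which is odd and hence impossible. If $F=C$ is reduced and irreducible, then $p_a(C)=p_a(F)=0$ (using $C^2=0$ and $K_M\cdot C=-2$), so $C$ is a smooth rational curve. This means $F\cong\mathbb P^1$, contradicting singularity.

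\emph{Reducible case.} By Zariski's lemma, $C_i^2<0$ for every component. The displayed identity allows only two possibilities: two components with $m_i=1$ and $-K_M\cdot C_i=1$, or one component with $m=2$ and $-K_M\cdot C=1$. The latter is irreducible and was already excluded, so $F=C_1+C_2$ is reduced. For each $i$, adjunction gives
\[
2p_a(C_i)-2=K_M\cdot C_i+C_i^2=-1+C_i^2\le -2,
\]
so $C_i^2\le -1$. Since $p_a(C_i)\ge 0$, this forces $p_a(C_i)=0$ and $C_i^2=-1$, so each $C_i$ is a smooth rational $(-1)$-curve. Finally, $0=F\cdot C_1=C_1^2+C_1\cdot C_2$ gives $C_1\cdot C_2=1$, so the two curves meet in a single point transversally.

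I expect the main subtle point to be the bookkeeping of Zariski's lemma and the multiplicity case; all other steps are direct intersection-number computations.
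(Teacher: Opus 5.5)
Your proof is correct. The paper gives no argument of its own here; it only cites \cite[Lemma 2.1]{XZ}. The natural comparison is therefore with the paper's proof of Lemma~\ref{bea}. That proof uses the same toolkit ($F^2=0$, $K_M\cdot F=-2$, negativity of components of a reducible fiber, adjunction), but it only finds \emph{one} component with $K\cdot E<0$, contracts it, and iterates.

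Your argument adds one input: $-K_M\cdot C_i>0$ for \emph{every} component. This follows from $[\omega_0]-2\pi Tc_1(M)=p^*[\omega_\Sigma]$ together with the positivity of the K\"ahler class $[\omega_0]$ on curves. Fed into $2=\sum_i m_i(-K_M\cdot C_i)$, it caps the fiber at two reduced components in a single step, so no contraction or induction is needed. It also exhibits exactly where the class condition enters.

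Two points of polish, neither a gap:
\begin{itemize}
\item In the reducible case, the alternative ``one component with $m=2$'' cannot arise at all. There are at least two components, and each contributes at least $1$ to the sum.
\item Zariski's lemma needs connected fibers. These follow from the connected general fiber by Stein factorization. Alternatively, you can bypass connectedness: $C_1^2=-C_1\cdot C_2\le 0$, and adjunction gives $C_1^2=2p_a(C_1)-1$, which is odd. Hence $C_1^2\le -1$ directly.
\end{itemize}
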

By the above lemma, we know $M$ is biholomorphic to $\operatorname{Bl}_{p_1,\ldots,p_N}(M_0)$ where $M_0$ is a ruled surface and the points $p_i$ lie on  distinct fibers of $M_0$, for $q\in \Delta$, we set
\begin{equation}\label{eq:two-curves}
 F_q=C_1+C_2,\quad C_j\simeq\PP^1,\quad
 C_1^2=C_2^2=-1,\quad C_1\cdot C_2=1.
\end{equation}
For a disk $D$ containing $q$ and no other singular value, $V=p^{-1}(D)$
therefore has the following topology:
\begin{equation}\label{eq:V}
 \begin{array}{c|c|c}
 q & V & Q_V\text{ on }H_2(V;\Z)\\ \hline
 q\notin\Delta&D\times\PP^1 &[0]\\
 q\in\Delta&\Bl_{\mathrm{pt}}(D\times\PP^1)&
 \begin{pmatrix}-1&1\\1&-1\end{pmatrix}.
 \end{array}
\end{equation}
where $Q_V$ denotes the intersection form. In both cases, $V$ is simply connected, possesses a single end, and satisfies $b_2(V) \leq 2$; moreover, $Q_V$ is negative semidefinite.
Due to $H^2(D;\R)=0$
and $c_1(M)\cdot G=2$. If $\omega_i=\tau_i^{-1}\omega(T-\tau_i)$, then
\begin{equation}\label{eq:local-class}
 [\omega_i]|_V=2\pi c_1(M)|_V,\qquad \int_G\omega_i=4\pi
\end{equation}
for every regular fiber $G$. 

We recall the fiber structure lemma of $\mathbb{P}^1$-fibration. It follows from the standard
contraction argument for rational fibrations; cf.
\cite[Lemmas III.8--III.9]{Beauville}.
We include the proof in the analytic setting.
\begin{lemma}\label{bea}
Let $f:S\to\Delta$ be a proper holomorphic map from a smooth
surface to a disk, with connected fibers and general fiber
$\mathbb P^1$. After shrinking $\Delta$ about $0$, $f$ is obtained
from a smooth $\mathbb P^1$-fibration by finitely many point
blowups over $0$.
\end{lemma}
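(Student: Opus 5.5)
The plan is the classical induction on the number of irreducible components of the central fiber, run in the analytic setting over a small disk. After shrinking $\Delta$, all fibers over $\Delta\setminus\{0\}$ are smooth. Here I use that $f$ is proper, so its critical values form a discrete analytic set. Every fiber over $\Delta\setminus\{0\}$ is then connected and smooth, with genus $0$ by deformation invariance of the Euler characteristic, hence $\cong\mathbb P^1$. Write $F_0=\sum_i m_iE_i$. Since $F_0$ is homologous to a general fiber $G$, we get $F_0\cdot E_j=0$ for all $j$ and $F_0^2=0$. Adjunction with $K_S\cdot G=-2$ gives $\sum m_i K_S\cdot E_i=-2$.

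\textbf{Step 1 (irreducible central fiber).} Suppose $F_0$ is irreducible, say $F_0=mE$. Then $E^2=0$, and adjunction gives $2p_a(E)-2=K_S\cdot E=-2/m$. This forces $m=1$ and $p_a(E)=0$, so $E$ is a smooth rational curve. Thus $f$ is a proper submersion near $E$ as well, since $F_0=E$ is reduced and smooth. By Fischer--Grauert, $f$ is then a holomorphic $\mathbb P^1$-bundle over the (shrunk) disk, which is the base case.

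\textbf{Step 2 (reducible central fiber: find a $(-1)$-curve).} Suppose $F_0$ has at least two components. By Zariski's lemma on the negative semidefinite form on components of a connected fiber, $E_j^2<0$ for every $j$. Since $\sum m_iK_S\cdot E_i=-2<0$, some component satisfies $K_S\cdot E_j<0$. Adjunction $E_j^2+K_S\cdot E_j=2p_a(E_j)-2\ge -2$ together with $E_j^2\le -1$ then forces $E_j^2=K_S\cdot E_j=-1$ and $p_a(E_j)=0$. So $E_j$ is a smooth rational $(-1)$-curve. These intersection-theoretic facts are local to a neighborhood of the compact curve $F_0$, so they hold verbatim on the analytic surface $S$.

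\textbf{Step 3 (contract and induct).} By Grauert's criterion, or directly by the analytic Castelnuovo contraction theorem for $(-1)$-curves, $E_j$ can be contracted to a smooth point. This gives a smooth surface $S'$ and a proper map $f':S'\to\Delta$ with $f=f'\circ\pi$. The map $f'$ exists because $f$ is constant on $E_j$: compose $f$ with $\pi^{-1}$ away from the point and extend by the Riemann extension theorem. The fibers of $f'$ are still connected, and the general fiber is unchanged. The central fiber of $f'$ has one fewer component, so induction on the number of components terminates at Step 1. Reversing the contractions exhibits $f$ as obtained from a $\mathbb P^1$-bundle by finitely many point blowups, all lying over $0$.

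The main obstacle is Step 2 in the analytic category. One must justify Zariski's lemma, namely that $E_j^2<0$ for each component of a reducible fiber, without projectivity. I would prove it directly. Take a local defining function $t\circ f$ of $F_0$; then $\mathcal O(F_0)$ is trivial near $F_0$, which gives $F_0\cdot E_j=0$. Connectedness of $F_0$ gives $E_j\cdot(F_0-m_jE_j)>0$ whenever $F_0$ is reducible, and hence $m_jE_j^2<0$. Adjunction for embedded compact curves in a complex surface holds analytically, so the remaining steps go through.
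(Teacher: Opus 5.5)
Your proposal is correct and follows the paper's argument: negativity of every component of a reducible fiber together with $K_S\cdot F_0=-2$ gives a $(-1)$-curve by adjunction, which is contracted by the analytic Castelnuovo theorem until the central fiber is $mD$ with $D^2=0$, and then $m=1$ and $D\simeq\mathbb P^1$. You also spell out justifications the paper leaves implicit, namely the analytic Zariski lemma via triviality of $\mathcal O(F_0)$ and Fischer--Grauert for the base case, but the route is the same.
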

\begin{proof}
By flatness, the central fiber $F$ satisfies
$F^2=0$ and $K_S\cdot F=-2$.
If $F$ is reducible, every component has negative self-intersection,
and some component $E$ satisfies $K_S\cdot E<0$.
Adjunction then gives
\[
  E^2=K_S\cdot E=-1,\qquad p_a(E)=0.
\]
Contract $E$ by the analytic Castelnuovo theorem.
The map descends and remains proper, with connected fibers
and general fiber $\mathbb P^1$. Repeating this argument, the central fiber becomes $mD$ with
$D$ irreducible. Since
\[
  D^2=0,\qquad -2=mK\cdot D=m(2p_a(D)-2),
\]
we have $m=1$ and $D\simeq\mathbb P^1$.
After shrinking $\Delta$, the new map is a smooth
$\mathbb P^1$-fibration. The original map is obtained from it
by finitely many point blowups over $0$.
\end{proof}

\subsection{Compactness of Ricci flow}
In this section, we review some of Bamler's weak compactness and partial regularity theories that we need \cite{Bam20a,Bam23,Bam20b}. For the Kähler case, we refer to \cite{CHM25}.
\subsubsection{Conjugate heat kernel and pointed Nash entropy}
Let $(M,g(t)_{t\in[0,T)})$ be a Ricci flow on closed manifold. For $(x,t)\in M\times [0,T)$, let $K(x,t,\cdot,\cdot):M\times[0,T)\to (0,\infty)$ denote the conjugate heat kernel based at $(x,t)$. Define probability measures $d\nu_{x,t;s}:=K(x,t;\cdot,s)dg_s$, where $dg_s$ denotes Riemannian volume measure of $(M,g_s)$ for $s\in[0,T)$.

Following \cite[Definition 9.2]{Bam20a}, the $P^*$-parabolic
neighborhood centered at $(x_0,t_0)\in M\times[0,T)$ is defined by
\[
P^*(x_0,t_0;A,-T^-,T^+)
:=
\left\{
(x,t)\in M\times[0,T)
\,\middle|\,
\begin{aligned}
&t\in[t_0-T^-,t_0+T^+],\\
&d_{W_1}^{g_{t_0-T^-}}
\bigl(\nu_{x_0,t_0;t_0-T^-},\nu_{x,t;t_0-T^-}\bigr)<A
\end{aligned}
\right\}.
\]
Here, $d_{W_1}^{g_t}$ denotes the $1$-Wasserstein distance between
probability measures on the metric space $(M,d_{g_t})$; see
\cite[Section 2]{Bam23}.

Fix $x_0\in M$ and let $T_i\nearrow T$. After passing to a
subsequence, $K(x_0,T_i;\cdot,\cdot)$ converges in
$C_{\mathrm{loc}}^\infty(M\times(0,T))$ to a positive solution of the
conjugate heat equation \cite[Lemma 2.2]{MM15},
\[
K(x_0,T;\cdot,\cdot)
:=
\lim_{i\to\infty}K(x_0,T_i;\cdot,\cdot)
\colon M\times[0,T)\longrightarrow\mathbb{R}.
\]
For $t\in[0,T)$, define $d\nu_{x_0,T;t}=K(x_0,T;\cdot,t)\,dg_t$. Then, for any $\varepsilon>0$,
\begin{align*}
    \lim_{i\to\infty}
\sup_{t\in[0,T-\varepsilon]}
d_{W_1}^{g_t}\bigl(\nu_{x_0,T_i;t},\nu_{x_0,T;t}\bigr)=0
\end{align*}
by \cite[Lemma 2.35]{Bam20b}. This $W_1$-convergence also yields
\[
\int_M\int_M d_t^2(x,y)\,
d\nu_{x_0,T;t}(x)\,d\nu_{x_0,T;t}(y)
\leq H_{n}(T-t),
\]
where $H_n:=\frac{(n-1)\pi^2}{2}+4$. Consequently, for each $t\in[0,T)$, there is a point $z\in M$
satisfying $\int_M d_t^2(z,y)\,d\nu_{x_0,T;t}(y)
\leq H_{n}(T-t)$; see \cite[Section 3]{Bam20a}. Such a point $(z,t)$ is called an
$H_{n}$-center of $(x_0,T)$.

We now recall the notion of pointed Nash entropy and some related results.
\begin{definition}\cite[Section 2.6]{Bam20b}
    $K(x_0,T;\cdot,\cdot)$ and $\nu_{x_0,T;t}$ are called conjugate heat kernels based at $(x_0,T)$. Define $f \in C^\infty(M \times [0,t))$ by $K(x_0,T;\cdot,t) = (2\pi\tau)^{-n/2} e^{-f} dg_t$, where $\tau := T-t$. Then the pointed Nash entropy at $(x_0,T)$ is given by
\[
\mathcal N_{x_0,T}(\tau) := \int_M f \, d\nu_{x_0,T;t} - \frac{n}{2}.
\]
\end{definition}
By \cite[Corollary 5.11]{Bam20a}, we know $\mathcal N_{x_0,T}(\tau)$ is independent of the sequence $T_i\nearrow T$, so $\mathcal N_{x_0,T}(0) := \lim_{\tau \searrow 0} \mathcal N_{x_0,T}(\tau) \in (-\infty,0]$ depends only on $x_0\in M$.
\subsubsection{Compactness and partial regularity of Ricci flow}
We first introduce the notion of orbifold Kähler-Ricci shrinker, which appears as a limit in Bamler's compactness theory for Kähler-Ricci flow.

\begin{definition}\cite[Definition 2.5]{CHM25}
    An orbifold Kähler-Ricci shrinker $(X,g,J,f)$ is a Kähler orbifold $(X,g)$ with complex structure $J$, together with a smooth real--valued function $f\in C^\infty(X)$ satisfying
\[
\operatorname{Ric}_g + \nabla^2 f = g,\qquad \mathcal{L}_{\nabla f}J = 0.
\]
We say $X$ has \textit{isolated singularities} if $X\setminus X_{\mathrm{reg}}$ is discrete. The isotropy group of an isolated singularity is a non--trivial finite subgroup of $U(n)$ acting freely on $\mathbb{C}^n\setminus\{0\}$.
\end{definition}

Because $\nabla f$ vanishes at each point of $X\setminus X_{reg}$, we can define a 1-parameter family of biholomorphisms of $(X,J)$ that preserves $X_{reg}$ by defining $\varphi_{-1}=id_X$ and $\partial_t \varphi_t(x)=\nabla f(\varphi_t(x))/|t|$. Set $g_t:=|t|\varphi_t^*g$, $(X,(g_t)_{t\in(-\infty,0)},J)$ defines an orbifold Kähler-Ricci flow. At each orbifold singular point, $X$ is locally biholomorphic to $\mathbb{C}^n/\Gamma$ for some finite subgroup $\Gamma\subseteq U(n)$. When all $\Gamma$ are trivial, we call $X$ a smooth Kähler‑Ricci shrinker.

Let $(M_i,(g_{i,t})_{t\in(-T_i,0]},(x_i,0))$ be a sequence of pointed Ricci flows on compact manifolds of the same dimension $n$ and $T_\infty:=\lim_{i\to\infty} T_i\in(0,\infty]$. By the results of \cite{Bam23} we may pass to a subsequence and obtain $\mathbb F$-convergence on compact time-intervals
\begin{align}\label{krs}
  \big(M_i,(g_{i,t})_{t\in(-T_i,0]},(\nu_{x_i,0;t})_{t\in(-T_i,0]}\big)
\xrightarrow[\,i\to\infty\,]{ \mathbb F,\mathfrak C }
\big(\mathcal X,(\nu_{x_\infty;t})_{t\in(-T_\infty,0]}\big),
\end{align}
within some correspondence $\mathfrak C$ (for more details see \cite{Bam23}). For some uniform $0<\tau_0<T_\infty$, $Y_0<\infty$ and any $i$, under the following non-collapsing assumption 
\begin{align}\label{noncollapse}
    \mathcal N_{x_i,0}(\tau_0)\ge -Y_0,
\end{align}
 the limit metric flow pair admits a regular-singular decomposition\cite[Theorem 2.4,2.5]{Bam20b}. If it also satisfy 
 \begin{align}
    \lim_{i\to\infty}\mathcal N_{x_i,0}(\tau)=W, \quad \text{for any} \quad \tau\in(0,T)
 \end{align}
 for some constant $W$, then the limit $\big(\mathcal X,(\nu_{x_\infty;t})_{t\in(-T_\infty,0]}\big)$ is a metric soliton (singular Ricci shrinker) \cite[Theorem 2.18]{Bam20b}. In compact 2-dim Kähler-Ricci flow, such metric soliton is an orbifold Kähler-Ricci shrinker $(X,g,J,f)$. More precisely, there exists an precompact exhaustion $(U_i)$ of $X_{reg}$ along with open embeddings $\psi_i: U_i \to M_i$ such that 
 \begin{align*}
    \psi_i^*g_{i,t}\to g_t, \qquad \psi_i^*J_M \to J, \qquad \psi_i^* \nu_{x_i,0;t}\to \nu_{x_\infty;t}.
 \end{align*}
in $C^{\infty}_{loc}(X_{reg}\times (-\infty,0))$ as $i\to \infty$. See also \cite[Theorem 2.37]{Bam20b}, \cite[Theorem 2.5]{HJ23}, \cite[Theorem 2.8]{CHM25} for more discussion.

\subsection{Polarized Fano fibration}
Here we recall the polarized Fano fibration
framework of \cite{SZ,HZ26}.

A \textit{fibration} is a surjective projective morphism $\pi \colon X \to Y $
between normal varieties such that $\pi_*\mathcal{O}_X = \mathcal{O}_Y$. 
\begin{definition}\cite{CT}
    A \textit{polarized affine cone} $(Y,\xi,T)$ consists of a normal affine variety $Y = \operatorname{Spec} R$ with a compact torus $T$-action admitting a unique fixed point, together with a vector $\xi \in \operatorname{Lie}(T)$ lying in the Reeb cone. Here the latter condition means that for the weight decomposition
\[
R = \bigoplus_{\alpha \in \operatorname{Lie}(T)^*} R_\alpha,
\]
one has $\langle \alpha, \xi \rangle > 0$ whenever $R_\alpha \neq 0$ and $\alpha \neq 0$.
\end{definition}
\begin{definition}[Polarized Fano fibration]
    A polarized Fano fibration $(\pi\colon X \to Y, \xi)$ is a fibration $\pi\colon X \to Y$ with the following properties:
\begin{enumerate}
    \item $\pi\colon X \to Y$ is a Fano fibration. That is, $X$ and $Y$ are normal varieties, $X$ is klt, and that $-K_X$ is $\pi$-ample and $\mathbb{Q}$-Cartier.
    \item $X$ and $Y$ are equipped with a $\pi$-equivariant torus action $T$, and $\xi \in \mathfrak{t} = \operatorname{Lie}(T)$.
    \item $(Y, T, \xi)$ is a polarized affine cone.
\end{enumerate}
\end{definition}
Hallgren and Zhang connect Bamler's compactness theory with algebraic geometry.
\begin{theorem}\cite{HZ26}
    \begin{enumerate}
      \item The singular Kähler-Ricci shrinker arised from (\ref{krs}) admits polarized Fano fibration structure;
      \item The metric singular set of X coincides with its complex analytic singular set.
    \end{enumerate}
\end{theorem}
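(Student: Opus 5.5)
The plan is to rebuild the Donaldson--Sun type algebraization argument in the shrinker setting, using the structure of the limit $\mathcal X$ from (\ref{krs}) under the noncollapsing assumption (\ref{noncollapse}) and constant-entropy hypothesis. By \cite[Theorems 2.4, 2.5, 2.18]{Bam20b}, $\mathcal X$ is a metric soliton: a singular space $X=X_{\reg}\cup S$ with the following properties.
\begin{itemize}
\item $X_{\reg}$ carries a smooth Kähler shrinker $(g,J,f)$, with $J$ the limit of $\psi_i^*J_M$.
\item $S$ is closed and has Minkowski codimension $\ge 4$.
\item $f$ has quadratic growth $\tfrac14(d(x_\infty,\cdot)-C)^2\le f\le \tfrac14(d+C)^2$.
\item Every tangent cone is a metric cone whose regular part is Ricci-flat Kähler.
\end{itemize}
The field $J\nabla f$ is a holomorphic Killing field. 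Let $T$ be the closure of its flow in the isometry group of $X$; this is a compact torus, and I set $\xi:=J\nabla f\in\mathfrak t$.

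Step 1 (holomorphic sections). On $X_{\reg}$ I would solve $\bar\partial$-equations for sections of $K_X^{-m}$, twisted by the weight $e^{-f}$, with Hörmander $L^2$ estimates. This uses the identity $\Ric+\nabla^2 f=g$, so the weighted Bakry--Émery curvature is positive. The singular set is handled by cutoffs, which are legitimate because $S$ has codimension $\ge4$ and so has zero capacity. The resulting $L^2$ sections extend across $S$ by normality of the limit analytic structure. To get enough sections I would prove a partial $C^0$ estimate, uniformly at every point and scale. I expect this to be the main obstacle. The first approach I would try is the usual one: use Bamler's cone-splitting and the fact that tangent cones at all points are Ricci-flat Kähler cones. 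Then glue peak sections from the Gaussian model on the cone, and control errors by the gradient and $L^\infty$ estimates of the weighted Laplacian. The output is that $X$ is a normal complex space. Its regular locus contains $X_{\reg}$. Sections of $K_X^{-m}$ with polynomial growth separate points and tangents on every compact set.

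Step 2 (the base $Y$ and the fibration). Let $R$ be the ring of holomorphic functions on $X$ with polynomial growth in $d(x_\infty,\cdot)$. Since $T$ acts, $R=\bigoplus_\alpha R_\alpha$ splits into weight spaces. A function of weight $\alpha$ satisfies $\xi(u)=\langle\alpha,\xi\rangle u$. Quadratic growth of $f$ forces $\langle\alpha,\xi\rangle>0$ unless $u$ is constant, so $\xi$ lies in the Reeb cone. The weighted volume is finite, so the three-circle/Hilbert-series bound makes each $\dim R_\alpha$ finite. An Artin--Rees style argument, using the sections of Step 1, then gives finite generation of $R$. Set $Y=\operatorname{Spec}R$; it is normal, and the $T$-action on it has the unique fixed point corresponding to the level sets of $f$ near its minimum. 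The evaluation map $\pi:X\to Y$ is then proper; this uses quadratic growth of $f$ to see that preimages of bounded sets are bounded. Moreover $\pi_*\mathcal O_X=\mathcal O_Y$ because $R$ is by definition all such functions. Relative Kodaira embedding via the sections of $K_X^{-m}$ produced in Step 1 gives projectivity of $\pi$ and $\pi$-ampleness of $-K_X$.

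Step 3 (klt). The space $X$ is $\mathbb{Q}$-Gorenstein because the volume form $e^{-f}dV$ is the limit of smooth volume forms. It is log terminal because $\int_X e^{-f}dV<\infty$ and the pointed Nash entropy is bounded below. Together these give local integrability of $|s|^{2/m}$ for a local generator $s$ of $K_X^{-m}$, which is the analytic criterion for klt.

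Step 4 (singular sets). Metric regular points are clearly analytically smooth. For the converse, take an analytically smooth point $x$. The metric tangent cone at $x$ is a Ricci-flat Kähler cone, and by Steps 1--3 it is the Gromov--Hausdorff limit of rescalings of a germ isomorphic to $(\mathbb C^n,0)$. The two-step degeneration and the Li--Wang--Xu identification of volume density with normalized volume then show that the density at $x$ equals the normalized volume of a smooth point, which is the Euclidean density. Bamler's $\varepsilon$-regularity then makes $x$ a regular point of $\mathcal X$, so the metric and analytic singular sets coincide.
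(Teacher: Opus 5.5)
The paper gives no proof of this statement. It is quoted from \cite{HZ26} and used only for complex surfaces, where \cite{CHM25} already supplies isolated orbifold points. Your outline follows the Donaldson--Sun/Sun--Zhang template such a proof would use, but Step~2 has a genuine gap: the properness of $\pi$.

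Quadratic growth of $f$ says nothing about whether polynomial-growth holomorphic functions blow up at infinity. If $R=\C$, your reasoning would show that every such $X$ is compact. That is a nontrivial special case of what must be proved, and quadratic growth of $f$ alone cannot give it.

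Here is what properness actually requires and one way to supply it.
\begin{itemize}
\item A weight-$\alpha$ function scales by $e^{\lambda_\alpha s}$, with $\lambda_\alpha\propto\langle\alpha,\xi\rangle>0$, along the flow of $\nabla f$. The backward flow contracts $X$ toward $\operatorname{Crit}(f)\subset\pi^{-1}(o)$. Hence properness is equivalent to compactness of $\pi^{-1}(o)$, the common zero set of the positive-weight elements of $R$.
\item That compactness needs an analytic input you never supply. One option is a uniform lower bound $B(x)e^{-f(x)}\ge c$ for the Bergman kernel of $L^2(e^{-f})$-holomorphic functions at far-out points. This is a partial $C^0$ estimate at infinity, including near singular points that may escape to infinity.
\item Given that bound, take a peak function $u$ at $x\in\pi^{-1}(o)$. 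Its positive-weight $T$-Fourier components have polynomial growth by the same homogeneity, so they lie in $R$ and vanish at $x$. Its invariant component is $\Delta_f$-harmonic in $L^2(e^{-f})$, hence constant.
\item So $|u(x)|\le C\|u\|$, which contradicts $|u(x)|^2\ge c\,e^{f(x)}\|u\|^2$ once $f(x)$ is large.
\end{itemize}

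The rest of Step~2 depends on this. Finite generation is naturally obtained after properness: a proper equivariant map to $\C^N$ by finitely many weight vectors, then Remmert's theorem, Stein factorization and normality. Your ``Artin--Rees style argument'' does not replace this. The same factorization gives $\pi_*\mathcal O_X=\mathcal O_Y$. Saying ``$R$ is all such functions'' only concerns global functions, not local sections of $\pi_*\mathcal O_X$.

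Step~3 is also wrong as written.
\begin{itemize}
\item A limit of smooth volume forms does not make $mK_X$ Cartier. $\Q$-Gorensteinness comes from the Step~1 peak sections. A section $s$ of $K^{-m}$ with $|s|_g\ge c$ near $x$ generates the reflexive extension of $K_{X_{\reg}}^{-m}$ there, since $t/s$ extends across codimension two by normality.
\item The analytic klt criterion is local integrability of $|s|_g^{-2/m}\,dV_g$, the adapted measure of $s^{-1}\in mK_X$. It is not integrability of $|s|^{2/m}$, which is bounded anyway. It follows from the same lower bound on $|s|_g$, not from $\int e^{-f}dV<\infty$ or the Nash entropy bound.
\end{itemize}

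In Step~1, you cannot use ``normality of the limit analytic structure'' to extend sections across $S$, because that structure is the output of Step~1. One first extends the sections Lipschitz across $S$ by gradient estimates, then defines the analytic structure through them.

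Finally, in the setting where the paper applies the theorem, the singular points are isolated quotients $\C^2/\Gamma$ with $\Gamma\subset U(2)$ acting freely on $S^3$.
\begin{itemize}
\item Such points are automatically klt and $\Q$-Gorenstein.
\item They contain no pseudo-reflections, so they are analytically singular iff $\Gamma\neq1$.
\item A normal structure extending that of $X_{\reg}$ across a discrete set is unique.
\end{itemize}
So part~(2) is elementary there, and your Step~4 via Li--Wang--Xu is unnecessary. The substantive input is part~(1).
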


\section{The smoothness of tangent shrinker}
\subsection{Tangent flow and $\mathbb{P}^1$-fibration}

In this section, we consider the Kähler tangent flow of $(M,g(t)_{t\in[0,T)},J_M)$ where $M$ admits a $\mathbb{P}^1$ fibration $p: M\to \Sigma$. Fix a conjugate heat kernel $(\nu_{x_0,T;t})$ based at $(x_0,T)$ with $x_0 \in p^{-1}(\Delta)$. For any sequence $\tau_i>0,\tau_i\searrow0$, we set $g_{i,t}:=\tau_i^{-1}g(T+\tau_it)$, by \cite[Theorem 2.8]{CHM25} we have the following $\mathbb{F}$-convergence
\begin{align*}
    \big(M_i,(g_{i,t})_{t\in[-\tau_i^{-1}T,0)},(\nu_{x_0,T;T+\tau_it})_{t\in[-\tau_i^{-1}T,0)}\big)
\xrightarrow[\,i\to\infty\,]{ \mathbb F,\mathfrak C }
\big(\mathcal X,(\nu_{x_\infty;t})_{t\in(-\infty,0)}\big),
\end{align*}
where $\big(\mathcal X,(\nu_{x_\infty;t})_{t\in(-\infty,0)}\big)$ is an orbifold Kähler-Ricci shrinker $(X,g,J,f)$ with isolated orbifold singularities and we set $X=X_{reg}\cup X_{sin}$.

\begin{proposition}\label{pr-bounded}
    For tangent shrinker $X$, there exists compact set $K\supset X_{\sin}$ with uniform constant $\Lambda<\infty$ and $i_0>0$ such that
    \begin{align*}
        \sup_{X}|\mathrm{Rm}|_g<\infty,\qquad
\operatorname{inj}_g(x)\ge i_0
\qquad\text{for any }x\in X\setminus K.
    \end{align*}
\end{proposition}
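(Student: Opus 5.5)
We will prove Proposition~\ref{pr-bounded} by combining the algebraic structure from \cite{HZ26} with the soliton identities for $(X,g,J,f)$ on a region where the geometry is forced to be asymptotically conical. The statement should be read as: $|\Rm|_g$ is bounded on $X\setminus K$ (hence on $X$ minus small orbifold balls, and by the orbifold structure near each isolated singular point, on all of $X$ in the orbifold sense), and the injectivity radius is bounded below on $X\setminus K$.

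\textbf{Step 1: finiteness of the singular set.} By \cite{HZ26}, $X$ carries a polarized Fano fibration $\pi:X\to Y$ with torus action, and the metric singular set equals the analytic singular set. The analytic singular set of a normal quasi-projective surface is finite, and on a shrinker the singular points lie in the zero set of $\nabla f$, which is compact because $f$ grows quadratically:
\[
\tfrac14\bigl(d(x,o)-c\bigr)_+^2\le f(x)\le \tfrac14\bigl(d(x,o)+c\bigr)^2 .
\]
This quadratic growth follows from the standard Cao--Zhou argument, which still works on the orbifold because the singularities are isolated and measure-zero. We then take $K:=\{f\le R\}$ with $R$ large. This set is compact and contains $X_{\sin}$.

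\textbf{Step 2: curvature bound at infinity.}

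1. Use the Fano fibration together with the $\xi$-action. The Reeb field $\xi$ is identified with $J\nabla f$, up to normalization, because the soliton vector field generates the torus action.
2. Outside $K$, $X$ is biholomorphic to a neighborhood of infinity in the fibration over the cone $Y$. Since $\dim_\C X=2$, the only options are that $Y$ is a point (then $X$ is compact and the claim is trivial), that $Y$ is a curve cone such as $\C$ with compact $\PP^1$-type fibers, or that $Y$ is a surface cone.
3. In each case, invoke the asymptotic description of \cite{HZ26,CT}: the end of $X$ is smoothly modelled either on the cylinder $\PP^1\times\C$ or on a Kähler cone $\C^2/\Gamma$. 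In both models the curvature decays or stays bounded, and the injectivity radius is bounded below away from the vertex.
4. Transfer these bounds to $X\setminus K$ via the smooth convergence of the metric to the model along the end.

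An alternative route is a contradiction argument. Suppose $|\Rm|(x_k)\to\infty$ along $x_k\to\infty$. Rescale, and use the fact that the end is smooth, being contained in $X_{\reg}$, with scalar curvature bounded by the identity $R+|\nabla f|^2=f$ and the Kähler structure. This produces a nonflat ancient limit which splits a line along the $\nabla f$ direction. In complex dimension 2 this is incompatible with Bamler's noncollapsing (the entropy bound from (\ref{noncollapse})).

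\textbf{Step 3: injectivity radius.} Once curvature is bounded on $X\setminus K$, the lower bound on $\operatorname{inj}$ follows from Cheeger--Gromov--Taylor. The required volume noncollapsing $|B(x,1)|\ge c$ for $x\in X\setminus K$ comes from the Nash entropy lower bound inherited from the flow through $\mathbb F$-convergence. Via Bamler's $\varepsilon$-regularity and the heat kernel bounds, this gives a uniform volume ratio lower bound at unit scale. On the orbifold the same argument applies in $X\setminus K$, since these balls avoid singular points.

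The main obstacle is Step 2, namely getting a curvature bound at infinity without assuming it a priori. I would first try to extract it from the Fano fibration structure, identifying the end with the $\PP^1$-fibration over $\C$ that is used later. If the required asymptotics are not available from \cite{HZ26}, I would fall back on the blow-up and splitting argument.
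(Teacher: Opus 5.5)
\textbf{Where the proposal agrees with the paper.} Your Steps 1 and 3 are what the paper does. You get finiteness of $X_{\sin}$ from \cite{HZ26} together with normality in complex dimension two. You get the injectivity radius from Bamler's volume noncollapsing \cite[Theorem 6.1]{Bam20a} combined with Cheeger--Gromov--Taylor.

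\textbf{The gap is Step 2.} This step is the entire content of the proposition, and neither of your two routes proves it.

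Route (a) assumes that the end of $X$ is smoothly modelled on $\PP^1\times\C$ or on a cone $\C^2/\Gamma$. However, \cite{HZ26} gives an algebraic fibration structure, not smooth metric asymptotics of the end. The known asymptotic-structure results for shrinkers take curvature bounds or decay as input, so this route is circular.

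Route (b) breaks in two places.
\begin{enumerate}
\item The identity $R+|\nabla f|^2=f$ only gives $R\le f$, which grows quadratically in the distance. A uniform bound on $R$ at infinity is exactly the hard analytic step in bounded-curvature theorems for K\"ahler--Ricci shrinker surfaces, and you have not supplied it.
\item The splitting does not follow. Take $Q_k=|\Rm|(x_k)$ and $u=Q_k^{1/2}f/|\nabla f|(x_k)$, with norms taken in the metric $Q_kg$. Then $|\nabla u|\approx 1$ near $x_k$, but $|\nabla^2u|\lesssim Q_k^{1/2}/|\nabla f|(x_k)$. So an almost-line appears only if $|\Rm|(x_k)\ll|\nabla f|(x_k)^2\lesssim d(x_k)^2$, which is not known.
\end{enumerate}
Even granting a split limit, there is no contradiction with noncollapsing. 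In the K\"ahler setting the limit would be $\C\times\Sigma$, with $\Sigma$ a noncollapsed ancient flow on a Riemann surface, e.g.\ $\C$ times a shrinking round $S^2$. This limit is nonflat and fully compatible with the Nash entropy lower bound.

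\textbf{How the paper closes the step.} The paper quotes \cite[Theorem 4.1]{CHM25}, a bounded-curvature theorem for orbifold K\"ahler--Ricci shrinkers with finitely many singular points. This is precisely why finiteness of $X_{\sin}$ is established first. Replacing your Step 2 with that citation turns your outline into the paper's proof.
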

\begin{proof}
    By \cite{HZ26}, the metric and analytic singularities agree. Finite type and normality in dimension two imply that $X_{\sin}$ is finite. Then by \cite[Theorem 4.1]{CHM25}, $X$ has bounded curvature. \cite[Theorem 6.1]{Bam20a} provides a uniform lower bound on volume, and the Cheeger–Gromov–Taylor injectivity estimates imply the desired results.
\end{proof}

\begin{lemma}\label{lem:curves}
Every $x\in X_{\reg}$ lies on a compact irreducible holomorphic curve.
\end{lemma}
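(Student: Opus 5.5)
Fix $x\in X_{\reg}$. The plan is to produce the curve as a limit of fibers of $p$, using the smooth convergence on $X_{\reg}$. The embeddings $\psi_i:U_i\to M$ from the $\mathbb F$-convergence give $x_i=\psi_i(x)$. The rescaled flows $g_{i,t}$ carry the holomorphic curves $G_i:=p^{-1}(p(x_i))$, which are either regular fibers $\PP^1$ or singular fibers $C_1+C_2$. By \eqref{eq:local-class} their area is uniformly bounded: $\int_{G_i}\omega_i=4\pi$ at time $-1$. Since $x$ is regular, we can fix a small ball $B_g(x,2r)\Subset X_{\reg}$. On this ball $\psi_i^*g_{i,-1}\to g_{-1}$ and $\psi_i^*J_M\to J$ smoothly. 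So on $\psi_i^{-1}(G_i)\cap B(x,2r)$ we have holomorphic curves of bounded area with respect to almost-fixed Kähler metrics, all passing through $x$.

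First, Bishop's theorem applies on compact subsets of $X_{\reg}$: curves of uniformly bounded area, holomorphic for almost-Kähler structures converging smoothly, subconverge as currents and in Hausdorff sense to a holomorphic 1-cycle. Monotonicity at $x$ shows the limit through $x$ is nonempty, since it has area at least $c r^2$ in $B(x,r)$. Exhausting $X_{\reg}$ by $U_i$ and taking a diagonal subsequence gives a closed analytic subset $Z$ of pure dimension one in $X_{\reg}$, containing $x$, with total mass at most $4\pi$.

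Second, we show $Z$ is compact in $X$. This is the main obstacle. Finite total mass alone does not prevent $Z$ from escaping to infinity. The approach I would try first uses the soliton geometry together with Bamler's estimates. The fibers $G_i$ have $g_{i,-1}$-diameter bounded relative to the base point in $W_1$ sense: by the monotonicity formula on a bounded-curvature, injectivity-bounded region (Proposition~\ref{pr-bounded}), any piece of $G_i$ that travels distance $R$ in the region $X\setminus K$ must have area at least $cR$. Hence $\operatorname{diam}(G_i)\le C$ uniformly, away from $K$. Near the compact set $K$ we only need boundedness, which is automatic. Then $\psi_i^{-1}(G_i)$ stays inside a fixed ball $B(x,C')$, so $Z\subset \overline{B(x,C')}$. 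Its closure $\bar Z$ in $X$ adds at most finitely many orbifold points. By the Remmert--Stein theorem (the points of $X_{\sin}$ are isolated), $\bar Z$ is a compact analytic curve in $X$.

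Finally, take the irreducible component of $\bar Z$ through $x$. It is a compact irreducible holomorphic curve containing $x$, as required.

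As an alternative to the diameter bound, if it fails one could use the polarized Fano fibration structure from \cite{HZ26}. Since $Y$ is affine, a compact connected curve in $X$ must be contracted by $\pi$, so compactness could instead be checked by showing that the coordinate functions of $Y$ pulled back are bounded on $Z$. However, the diameter/monotonicity route above is the one I would attempt first.
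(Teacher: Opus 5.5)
Your construction of the limit cycle is essentially the paper's, and the Remmert--Stein step matches. The paper builds the limit with Federer compactness for the currents $T_i$, the fact that the limit is calibrated by $\omega$, and King's theorem. That is the clean way to handle the varying structures $\psi_i^*J_M\to J$. Note also that carrying the monotonicity lower bound at $x$ into the limit needs mass convergence $T_i(\chi\widehat\omega_i)\to Z(\chi\omega)$, which calibration provides; lower semicontinuity alone is not enough.

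The gap is in the compactness step. You assert $\operatorname{diam}_{g_{i,-1}}(G_i)\le C$ by monotonicity, but Proposition~\ref{pr-bounded} controls curvature and injectivity radius only on the limit $X$ outside $K$. It gives nothing for $(M,g_{i,-1})$ along $G_i$. In particular it says nothing on the parts of $G_i$ outside $\psi_i(U_i)$, where the rescaled flow near the singular fiber is not controlled at all. A uniform bound on rescaled fiber diameters near $F_q$ is essentially Corollary 1.3, which the paper obtains only from the Type I estimate, itself downstream of this lemma.

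Even granting such a bound in $M$, it would not give $\psi_i^{-1}(G_i)\subset B_g(x,C')$. The reason is that $g_{i,-1}$-distances and $g$-distances are comparable only on compact subsets of $X_{\reg}$ where the convergence holds. The affineness alternative does not close the gap either. You give no way to bound the pulled-back coordinates on $Z$, and the paper uses this very lemma to derive the fibration structure over $Y$.

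The fix is to run your monotonicity argument on the limit curve instead of the approximants, which is exactly the paper's proof. Let $D$ be the irreducible component through $x$ of the closure of $\operatorname{supp}Z$; then $\operatorname{Area}_g(D)\le\mathbf M_g(Z)\le 4\pi$. By Proposition~\ref{pr-bounded} there are a compact $K\supset X_{\sin}$ and $r_0,c_0>0$ with $\operatorname{Area}_g(D\cap B_g(z,r_0))\ge c_0$ for every $z\in D\setminus K$. If $D$ were noncompact, then, being closed in the complete space $X$, it would be unbounded. It would then contain infinitely many points $z_j\notin K$ with pairwise disjoint balls $B_g(z_j,r_0)$, giving infinite area.

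So your remark that finite mass alone does not stop $Z$ from escaping to infinity is true in general but false here, precisely because of the bounded geometry of $X$ outside $K$. No control of $G_i$ inside $M$ is needed.
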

\begin{proof}
Fix $x\in X_{\mathrm{reg}}$. Since regular-fiber points are dense,
we may choose $x_i\to x$ such that $y_i=\psi_i(x_i)$ lies on a
regular fiber $G_i=p^{-1}(p(y_i))$. By \eqref{eq:local-class}, we know $\textstyle \int_{G_i}\omega_i=4\pi$. We write
$\widehat g_i=\psi_i^*g_i$ and
$\widehat\omega_i=\psi_i^*\omega_i$, and consider $T_i=\bigl[\psi_i^{-1}(G_i\cap\psi_i(U_i))\bigr]$ which satisfies
\begin{align*}
 \partial T_i=0\quad \text{in }U_i,
\qquad
\mathbf M_{\widehat g_i}(T_i)\le4\pi.
\end{align*}
Local compactness for integral currents $T_i$ and a
diagonal argument give a locally integral cycle $Z$ on
$X_{\mathrm{reg}}$ \cite[4.2.17]{Federer}. For every nonnegative $\chi\in C_c^\infty(X_{\mathrm{reg}})$,
smooth convergence gives
\[
  \mathbf M_g(\chi Z)
  \le \liminf_{i\to\infty}
      \mathbf M_{\widehat g_i}(\chi T_i)
  = \lim_{i\to\infty}T_i(\chi\widehat\omega_i)
  = Z(\chi\omega)
  \le \mathbf M_g(\chi Z).
\]
Thus $Z$ is calibrated by $\omega$. By King's theorem \cite{King} or \cite[Corollary 3.10.2]{TehYang}, $Z$ is
a positive holomorphic 1-cycle. The local mass bounds and smooth convergence of
$\widehat\omega_i$ imply
$T_i(\chi\widehat\omega_i)\to Z(\chi\omega)$, exhaustion by cutoffs gives
\[
  \mathbf M_g(Z)=\int_Z\omega\le4\pi.
\]

We next show that $x\in\operatorname{supp}Z$. For any sufficiently
small $r>0$, choose $\chi\in C_c^\infty(B_g(x,2r))$ with
$0\le\chi\le1$ and $\chi=1$ on $B_g(x,r)$. Since $x_i\to x$,
smooth convergence and the monotonicity formula give $T_i(\chi\widehat\omega_i)\ge cr^2$ for sufficiently large $i$. Passing to the limit yields
$Z(\chi\omega)\ge cr^2$. Hence $x\in\operatorname{supp}Z$.

Applying the Remmert--Stein theorem \cite[Chapter II, 8.7]{De} in quotient charts, the
closure of $\operatorname{supp}Z$ across the finite set
$X_{sin}$ is an analytic curve in $X$. Let $D$ be
an irreducible component containing $x$, we have
\[
  \operatorname{Area}_g(D)\le\mathbf M_g(Z)\le4\pi.
\]

By Proposition~\ref{pr-bounded} and the monotonicity formula \cite[Chapter~8, \S3]{simon2014introduction}, there
are a compact set $K\subset X$ and constants $r_0,c_0>0$ such that
\[
  \operatorname{Area}_g(D\cap B_g(z,r_0))\ge c_0
  \qquad\text{for all }z\in D\setminus K.
\]
If $D$ were noncompact,
its closedness would imply that it is unbounded. We could then
choose infinitely many points $z_j\in D\setminus K$ whose balls
$B_g(z_j,r_0)$ are pairwise disjoint. The preceding estimate
would give $\operatorname{Area}_g(D)=\infty$, a contradiction.
Thus $D$ is compact.
\end{proof}
Combining the above lemma with the structure of polarized Fano fibrations, we obtain the following explicit fibration structure.
\begin{proposition}
    $Y\simeq \mathbb{A}^1$, the general fiber is $\PP^1$,
and $X_{\sin}\subseteq\pi^{-1}(o)$
\end{proposition}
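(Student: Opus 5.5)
The plan is to combine the polarized Fano fibration $\pi\colon X\to Y$ from \cite{HZ26} with Lemma~\ref{lem:curves}, and to determine $Y$ by dimension counting.

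\textbf{Step 1: $\dim Y = 1$.} Since $X$ is a complex surface, $\dim Y\in\{0,1,2\}$.

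If $\dim Y=0$, then $X$ is a compact (Fano) orbifold. This contradicts volume collapsing. The tangent flow is taken at a point whose regular fibers have $\omega_i$-area $4\pi$ by \eqref{eq:local-class}, while the base directions expand: $\omega_i$ restricted to horizontal directions is $\tau_i^{-1}p^*\omega_\Sigma$ up to bounded error. Concretely, I would show that $\operatorname{diam}$ of the limit is infinite. A cheap way is to use that the limit has Nash entropy bounded below, so volume growth is Euclidean-type at scale $1$. If the limit were compact, its volume would be the limit of $\tau_i^{-2}\operatorname{Vol}_{g(T-\tau_i)}$ of an $O(1)$-region. Alternatively, compact shrinkers occur only in the extinction case (1), excluded by our assumption $\limsup\operatorname{diam}>0$ after rescaling, since the rescaled diameter diverges.

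If $\dim Y=2$, then $\pi$ is birational and $-K_X$ is $\pi$-ample, so every compact curve in $X$ is contracted by $\pi$ to the unique $T$-fixed point $o$ of the affine cone $Y$. Compact curves cannot map to other points: $Y$ is affine, so its positive-dimensional compact subvarieties are empty, and $\pi$ contracts a compact curve to a point, which must be $T$-invariant if the curve is. Using the torus action, which flows any point toward $\pi^{-1}(o)$ under $-\xi$, the union of all compact curves is the exceptional locus $\pi^{-1}(o)$, which is compact. This contradicts Lemma~\ref{lem:curves}, since every regular point would then lie in a compact set, forcing $X$ compact, which is excluded above.

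\textbf{Step 2: $Y\simeq\A^1$ and the general fiber is $\PP^1$.} A one-dimensional normal affine variety with a torus action having a unique fixed point, and with $\xi$ in the Reeb cone, is a normal affine curve with a nontrivial $\C^*$-action and an attractive fixed point. Its coordinate ring is positively graded and normal of dimension one, hence a polynomial ring in one variable, so $Y\simeq\A^1$. The general fiber of $\pi$ is a smooth curve $G$ with $-K_G$ ample, i.e.\ $\PP^1$. This is consistent with Lemma~\ref{lem:curves}, since fibers are the compact curves through points off $\pi^{-1}(o)$.

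\textbf{Step 3: $X_{\sin}\subseteq\pi^{-1}(o)$.} $X_{\sin}$ is finite and $T$-invariant, and $T$ acts on $Y$ with $o$ the only fixed point. Hence the $T$-orbit of $\pi(s)$ for $s\in X_{\sin}$ is finite. But nonzero points of $\A^1$ have infinite orbits under the effective $\C^*$-flow generated by $\xi$ (here I would use that $J\xi$ and $\xi$ generate a holomorphic $\C^*$-action, as the gradient flow of $f$). So $\pi(s)=o$.

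\textbf{Main obstacle.} I expect the delicate step to be ruling out $\dim Y=0$ rigorously, i.e.\ noncompactness of the tangent shrinker in the collapsing case. I would argue via the area normalization: a compact limit would force $M_i$ to have bounded rescaled diameter, contradicting $\operatorname{diam}(M,g(t))\not\to0$.
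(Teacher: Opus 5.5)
Your Steps 2 and 3 match the paper, and your $\dim_\C Y=2$ argument is essentially the paper's. The gap is the case $\dim_\C Y=0$. You flag it yourself as the main obstacle, but you do not prove it, and none of your suggested routes works as stated.

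At this stage $X$ is only known to be an orbifold, and the maps $\psi_i$ are defined only on an exhaustion $U_i$ of $X_{\reg}$. Suppose $X$ were compact. Then $\psi_i(U_i)$ would be a compact region of $V$ whose boundary consists of the small links $L_{\nu,i}=\psi_i(L_\nu)$ around the orbifold points. Since $V$ is connected and noncompact, the far side of at least one link would have to contain the end of $V$ and all of $M\setminus V$.

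The $\mathbb F$-limit records only where the conjugate heat kernel carries mass, and nothing you wrote shows that this far side carries any. So a compact orbifold limit is not, a priori, in conflict with the divergence of the rescaled diameter or rescaled volume of $M$. The remark about Nash entropy and Euclidean volume growth gives no contradiction either.

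The principle that compact tangent shrinkers only arise in the extinction case is the smooth Cheeger--Gromov fact that $\psi_i(X)$ is open and closed in $M$ when $X$ is a closed manifold. You cannot invoke it here. Smoothness of $X$ is exactly what Section~3.3 proves, and that proof uses the present proposition.

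The missing idea is a topological obstruction that does not care about orbifold points. The paper uses the negative semidefiniteness of $Q_V$ from \eqref{eq:V}. If $Y$ is a point, then $X$ is projective, so a general smooth ample curve $H$ avoids the finite set $X_{\sin}$ and has $H^2>0$. A compact neighbourhood of $H$ lies in $U_i$ and is carried by the orientation-preserving map $\psi_i$ into $V$. Intersection numbers are computed inside that neighbourhood, so $(\psi_i)_*[H]\in H_2(V;\Q)$ would have positive self-intersection, which is impossible.

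Separately, rewrite the end of your $\dim_\C Y=2$ case so that it does not route through the compact case. Every compact curve is contracted by $\pi$, so it lies in the union of positive-dimensional fibres of the birational map $\pi$. That union is a proper analytic subset of $X$ and cannot contain the dense set $X_{\reg}$ required by Lemma~\ref{lem:curves}. Your phrase ``is the exceptional locus'' should read ``is contained in the exceptional locus''.
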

\begin{proof}
If $\dim_\C Y=2$, $\pi$ is birational by normality and connected
fibers. A compact curve through a regular point in its isomorphism
locus contradicts affineness of $Y$. 

If $\dim_\C Y=0$, $X$ is
projective, a general smooth ample curve avoids its finite
singular set and has positive self-intersection. Its compact
neighborhood embeds into $V$, contradicting $Q_V\le0$.
Thus $\dim_\C Y=1$.

A normal affine curve with the positive Reeb
action is $\A^1$\cite[Proposition 2.15]{XZ26}. Relative anticanonical ampleness
and adjunction give genus zero for the smooth general fiber \cite[Section 4.1.1]{XZ26}. Because $X_{\sin}$ is the analytic singular set and $T$ biholomorphiclly acts on $X$, so $T\times X_{\sin} \subseteq X_{\sin}$, then $X_{\sin} \subseteq \pi^{-1}(o)$.   
\end{proof}

\subsection{The local topology near the singular fiber}

Take $o\in X_{\sin}$ with a chart $B^4/\Gamma$.
Choose a $\Gamma$-invariant K\"ahler potential, normalized such that $\omega=dd^c\varphi$ for $\varphi(z)=H(z,\bar z)+O(|z|^3)$ where $H$ is positive definite and
$d^c={\sqrt{-1}}(\bar\partial-\partial)/2$.
For small $r>0$, set $\Omega_r=\{\varphi<r^2\}/\Gamma$, $L_r=\partial\Omega_r\simeq S^3/\Gamma$, and $\alpha=d^c\varphi$. With the outward orientation, Stokes' theorem gives
\begin{equation}\label{eq:action}
  B_r:=\int_{L_r}\alpha\wedge d\alpha
      =\int_{\Omega_r}\omega^2>0,
  \qquad B_r=O(r^4).
\end{equation}

Fix $r$. Since $H^2(L_r;\mathbb R)=0$, in the collar direction give one-forms $\gamma_i$
on a fixed collar of $L_r$ satisfying $d\gamma_i=\psi_i^*\omega_i-\omega$ and $\gamma_i\longrightarrow0$ smoothly on a smaller fixed collar.
Set $L_{r,i}=\psi_i(L_r)$ and
$\alpha_i=(\psi_i^{-1})^*(\alpha+\gamma_i)$.
Then $d\alpha_i=\omega_i$, and orient $L_i$ so that $\psi_i:L\to L_i$ preserves orientation.
\begin{equation}\label{eq:action-limit}
  B_{r,i}:=\int_{L_{r,i}}\alpha_i\wedge d\alpha_i
  \longrightarrow B_r.
\end{equation}
In fact, the link $L_{r,i}\subset V$ bounds a unique relatively compact side and let $W_i$ denote its closure and $E_i$ denote $X \setminus W_i$, a compact manifold with boundary. More precisely, we have the following lemma.
\begin{lemma}\label{lem:W-topology}
In the complex orientation,
\[
H_2(W_i;\mathbb Q)\hookrightarrow H_2(V;\mathbb Q),
\qquad Q_{W_i}<0,\qquad b_2(W_i)\le1.
\]
Moreover, $H_1(W_i;\mathbb Q)=H_3(W_i;\mathbb Q)=0$.
\end{lemma}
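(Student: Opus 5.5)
We need to understand the setup first. $W_i$ is the compact region in $M$ bounded by $L_{r,i}=\psi_i(L_r)$, lying inside $V=p^{-1}(D)$ for $i$ large. Its boundary is a rational homology sphere $S^3/\Gamma$. The plan is to derive the statements from the Mayer–Vietoris and long exact sequences of the pair, together with the topology of $V$ recorded in \eqref{eq:V}.

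First, the vanishing of $H_1$ and $H_3$. Since $W_i$ is a compact connected oriented $4$-manifold with connected boundary, Lefschetz duality gives $H_3(W_i;\mathbb Q)\cong H^1(W_i,\partial W_i;\mathbb Q)$. From the long exact sequence of the pair,
\[
H^0(W_i)\to H^0(\partial W_i)\to H^1(W_i,\partial W_i)\to H^1(W_i)\to H^1(\partial W_i)=0 .
\]
The first map is onto because $\partial W_i$ is connected, so $H^1(W_i,\partial)\cong H^1(W_i)$. It therefore suffices to show $H_1(W_i;\mathbb Q)=0$.

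To see this, apply Mayer–Vietoris to $V=W_i\cup \overline{(V\setminus W_i)}$, which intersect in $L_{r,i}$. Since $H_1(L;\mathbb Q)=0$ and $H_0(L)\to H_0(W_i)\oplus H_0(V\setminus W_i)$ is injective, we get
\[
H_1(W_i)\oplus H_1(V\setminus W_i)\hookrightarrow H_1(V;\mathbb Q)=0 .
\]
Here $V$ is simply connected by \eqref{eq:V}. The same sequence in degree $2$ reads
\[
H_2(L)=0\to H_2(W_i)\oplus H_2(V\setminus W_i)\to H_2(V),
\]
which gives the injection $H_2(W_i;\mathbb Q)\hookrightarrow H_2(V;\mathbb Q)$.

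Next, the intersection form. Because $\partial W_i$ is a rational homology sphere, $Q_{W_i}$ is nondegenerate over $\mathbb Q$: the map $H_2(W_i)\to H_2(W_i,\partial W_i)$ is an isomorphism, since its neighbours $H_2(\partial)$ and $H_1(\partial)$ vanish rationally. The inclusion is an open embedding, so it preserves the intersection pairing, and $Q_{W_i}$ is the restriction of $Q_V$ to a subspace. Since $Q_V\le 0$ by \eqref{eq:V}, $Q_{W_i}$ is negative semidefinite, and nondegeneracy then forces $Q_{W_i}<0$.

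For the rank bound, $Q_V$ has rank at most $1$: it is $[0]$ or $\begin{pmatrix}-1&1\\1&-1\end{pmatrix}$, whose radical is spanned by the fiber class. A subspace on which $Q_V$ is negative definite meets the radical trivially. Hence $b_2(W_i)\le \operatorname{rank} Q_V\le 1$.

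The main point to check is that the complex orientation of $W_i$ agrees with the one used for $Q_V$, and that the "relatively compact side" $W_i$ really lies in $V$. For the orientation, $\psi_i$ is holomorphic up to small error, and $W_i$ is an open subset of the complex surface $M$. For containment, $\psi_i(\Omega_r)$ is close to a region of small diameter and volume, so I would argue it lies in $V$ for large $i$, using that $L_{r,i}$ separates $M$ because $H_1(M\setminus L)$ behaves suitably. I expect this separation and containment step to be the main obstacle. I would first try to use that $V$ has a single end and $L_{r,i}$ is a rational homology sphere, so $V\setminus L_{r,i}$ has exactly two components, one of which is bounded.
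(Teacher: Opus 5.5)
Your argument is correct and is essentially the paper's proof: Mayer--Vietoris across the rational homology sphere $L_{r,i}$ together with $H_1(V;\mathbb Q)=0$ gives $H_1(W_i;\mathbb Q)=0$ and the injection on $H_2$, Lefschetz duality gives nondegeneracy of $Q_{W_i}$, restricting the negative semidefinite form $Q_V$ of rank at most one gives $Q_{W_i}<0$ and $b_2(W_i)\le 1$, and the sequence of the pair gives $H_3(W_i;\mathbb Q)=0$. For the setup step you flagged, the paper gets separation from the simple connectivity of $V$, not from $L_{r,i}$ being a rational homology sphere (which does not by itself force separation, e.g.\ $S^3\times\{\mathrm{pt}\}\subset S^3\times S^1$); one-endedness then singles out the relatively compact side, so $W_i\subset V$ holds by definition and no comparison with ``$\psi_i(\Omega_r)$'' is needed---indeed $\psi_i$ is not defined at the orbifold point, and which side of $L_{r,i}$ the collar maps to is only settled in Lemma~\ref{lem:exact}.
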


\begin{proof}
The link separates $V$ because $V$ is simply connected
\cite[Theorem 4.4.6]{HR}. One-endedness implies that exactly
one side is relatively compact. Let $W_i$ be its closure
and set $E_i=V\setminus\operatorname{int}W_i$. Since $L_{r,i}$ is a rational homology sphere,
Mayer--Vietoris sequence gives
\[
H_2(V;\mathbb Q)\cong H_2(W_i;\mathbb Q)\oplus H_2(E_i;\mathbb Q),
\qquad H_1(W_i;\mathbb Q)=0.
\]
The inclusion preserves intersection pairings.
Poincar\'e--Lefschetz duality makes $Q_{W_i}$ nondegenerate.
By \eqref{eq:V}, $Q_V$ is negative semidefinite with negative
index at most one. Thus $Q_{W_i}$ is negative definite
and $b_2(W_i)\le1$. Since $W_i$ and its boundary are connected, the exact
sequence of the pair and $H^1(W_i;\mathbb Q)=0$ give
$H^1(W_i,\partial W_i;\mathbb Q)=0$. Duality yields
\[
H_3(W_i;\mathbb Q)\cong H^1(W_i,\partial W_i;\mathbb Q)=0.
\]
\end{proof}

\begin{lemma}\label{lem:exact}
    For sufficiently small $r$ and large $i$, $c_1(W_i;\R)=0$ and $[\omega_i|_{W_i}]=0$.
\end{lemma}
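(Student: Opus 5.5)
By Lemma~\ref{lem:W-topology}, $H_2(W_i;\mathbb Q)$ is either $0$ or one-dimensional, spanned by a class $A$ with $A\cdot A<0$. Moreover $H^2(W_i;\R)\cong \operatorname{Hom}(H_2(W_i;\R),\R)$, since $H_1(W_i;\mathbb Q)=0$ means there is no Ext term over $\R$. So a real degree-two class on $W_i$ is zero as soon as it pairs to zero with $A$, and in the case $b_2(W_i)=0$ both statements are immediate. By \eqref{eq:local-class}, $[\omega_i]|_V=2\pi c_1(M)|_V$, and restricting to $W_i\subset V$ gives $[\omega_i|_{W_i}]=2\pi c_1(W_i;\R)$. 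It therefore suffices to prove one of the two vanishing statements, and I will prove $[\omega_i|_{W_i}]=0$, i.e. $\langle[\omega_i],A\rangle=0$.

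The tool is the boundary action \eqref{eq:action}--\eqref{eq:action-limit}. Since $\omega_i$ is closed, $H_1(W_i)=0$, and $H^2(L_{r,i};\R)=0$, I use the long exact sequence of the pair $(W_i,\partial W_i)$ to lift $[\omega_i|_{W_i}]$ to a compactly supported class $\theta\in H^2(W_i,\partial W_i;\R)$. Concretely, $\omega_i-d\beta$ is compactly supported for a one-form $\beta$ that agrees with $\alpha_i$ near $L_{r,i}$. Stokes' theorem then gives the standard local cohomology formula
\[
\int_{W_i}\omega_i^2 \;=\; \int_{L_{r,i}}\alpha_i\wedge d\alpha_i \;+\; Q_{W_i}^{-1}(\theta,\theta),
\]
where the correction term is $(\langle\omega_i,A\rangle)^2/(A\cdot A)\le 0$ by the negative definiteness of $Q_{W_i}$. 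Equivalently, $\int_{W_i}\omega_i^2=B_{r,i}+\frac{\langle[\omega_i],A\rangle^2}{A\cdot A}$. The left side is positive and the first term on the right tends to $B_r=O(r^4)$, so this identity alone bounds $\langle[\omega_i],A\rangle^2\le |A\cdot A|\,B_{r,i}$, which is small for small $r$ and large $i$.

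To turn smallness into vanishing, I use integrality. We have $\langle[\omega_i],A\rangle=2\pi\langle c_1(M),A\rangle$. The class $A$ is rational, and since $b_2\le 1$ and $H_2(W_i;\mathbb Q)\hookrightarrow H_2(V;\mathbb Q)$, its image is a multiple of a fixed integral class in $H_2(V;\Z)$ with bounded denominators. Normalizing $A$ so that $\iota_*A$ is a primitive integral class of $V$, the number $\langle c_1(M),A\rangle$ is an integer. On the other side, $|A\cdot A|$ is bounded uniformly in $i$: the possible self-intersections are constrained by \eqref{eq:V}, whose classes are $aC_1+bC_2$ with $Q=-(a-b)^2$, and by the denominators of $Q_{W_i}$, which are controlled by $|H_1(L_{r,i};\Z)|=|\Gamma^{ab}|$. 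Combining, $(2\pi)^2\langle c_1,A\rangle^2\le |A\cdot A|\,B_{r,i}\to |A\cdot A|\,B_r$. Choosing $r$ small enough forces the integer $\langle c_1,A\rangle$ to be $0$. Hence both $[\omega_i|_{W_i}]=0$ and $c_1(W_i;\R)=0$.

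The main obstacle I expect is proving the local cohomology formula with the correct sign and orientation conventions, and making the uniform bound on $|A\cdot A|$ against the integral normalization rigorous. To handle the latter, I would first try expressing $A$ through the image in $H_2(V;\Z)\cong\Z\langle C_1,C_2\rangle$ and the order of $\Gamma$.
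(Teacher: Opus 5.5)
Your skeleton is the paper's argument: the Stokes identity $\int_{W_i}\omega_i^2=\pm B_{r,i}+(2\pi)^2\kappa_i^2$ with $\kappa_i=c_1(W_i)$ and $\kappa_i^2\le 0$, combined with a quantization of $\kappa_i^2$ and the smallness of $B_{r,i}$. The gap is the quantization itself, i.e.\ the uniform bound on $|A\cdot A|$. This is the real content of the lemma, and you assert it rather than prove it.

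The route you suggest, through $H_2(V;\mathbb Z)=\mathbb Z\langle C_1,C_2\rangle$, cannot supply the bound. Lemma~\ref{lem:W-topology} only says that $H_2(W_i;\mathbb Q)$ is \emph{some} negative-definite line, and primitive classes $aC_1+bC_2$ have square $-(a-b)^2$, which is unbounded. For instance $P=(n+1)C_1-nC_2$ is primitive with $\langle c_1(M),P\rangle=1$ and $P^2=-(2n+1)^2$. For such a class your inequality $(2\pi)^2\le(2n+1)^2B_{r,i}$ is no contradiction once $n$ is large. Moreover, $\Gamma$ cannot enter through $V$; it only enters through the boundary of $W_i$.

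The bound has to come from the integral lattice of $W_i$ and its boundary. Since $H_2(\partial W_i;\mathbb Z)=0$, there is an exact sequence $0\to H_2(W_i;\mathbb Z)\to H_2(W_i,\partial W_i;\mathbb Z)\to H_1(\partial W_i;\mathbb Z)$. The adjoint of $Q_{W_i}$ is this inclusion composed with $H_2(W_i,\partial W_i;\mathbb Z)\cong H^2(W_i;\mathbb Z)\twoheadrightarrow\operatorname{Hom}(H_2(W_i;\mathbb Z),\mathbb Z)$. So the discriminant of $Q_{W_i}$ on $H_2(W_i;\mathbb Z)/\mathrm{torsion}$ is the order of a quotient of a subgroup of $H_1(\partial W_i;\mathbb Z)$. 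It therefore divides $h=|H_1(L_{r,i};\mathbb Z)|=|\Gamma^{\mathrm{ab}}|$.

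Consequences:
\begin{itemize}
\item An integral generator $A_0$ satisfies $|A_0\cdot A_0|\le h$, and $n=\langle c_1(W_i),A_0\rangle\in\mathbb Z$ directly, with no need to pass to $V$.
\item If you keep your normalization, write $\iota_*A_0=mP$; then $|P^2|=|A_0^2|/m^2\le h$, so that also works once this fact is known.
\item Your identity then gives $0<\int_{W_i}\omega_i^2\le B_{r,i}-(2\pi)^2n^2/h$, which forces $n=0$ as soon as $B_{r,i}<(2\pi)^2/h$.
\item Since $h$ is independent of $r$ and $i$, you may first fix $r$ small and then take $i$ large.
\end{itemize}
This is dual to the paper's device: $h\kappa_i$ lifts to $H^2(W_i,\partial W_i;\mathbb Z)$, so $h\kappa_i^2\in\mathbb Z$, hence $\kappa_i^2\le-1/h$ unless $\kappa_i=0$ over $\R$.

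The orientation issue you flag resolves itself. If $\partial W_i$ carried the orientation opposite to the one induced by $\psi_i$, the same identity would give $\int_{W_i}\omega_i^2=-B_{r,i}+(\text{nonpositive})<0$, which is impossible. The paper records that the orientations therefore agree because this is used later, in Proposition~\ref{prop:k}, to place the outer collar outside $W_i$.
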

\begin{proof}
$W_i$ is oriented by the complex orientation of $M$. Set $\kappa_i=c_1(W_i)\in H^2(W_i;\mathbb Z)$ and $h=|H_1(L_{r,i};\mathbb Z)|$. The integer $h$ depends only on the link type. Choose $r$
small and $i$ large so that $0<B_{r,i}<{(2\pi)^2}/{h}$. The exact sequence of the pair $(W_i,\partial W_i)$ gives
\[
H^2(W_i,\partial W_i;\mathbb R)
\cong H^2(W_i;\mathbb R).
\]
Let $\bar\kappa_i$ be the relative lift of $\kappa_i$ over
$\mathbb R$, and define $\kappa_i^2
=\langle\bar\kappa_i\smile\kappa_i,
[W_i,\partial W_i]\rangle$. Since $H^2(\partial W_i;\mathbb Z)$ has order $h$,
the class $h\kappa_i$ has an integral relative lift.
Thus $h\kappa_i^2\in\mathbb Z$. By
Lemma~\ref{lem:W-topology}, $\kappa_i^2\le0$ and $\kappa_i^2\le-1/h$.

Choose a closed two-form $\theta_i$ supported in the interior
of $W_i$ and representing $\bar\kappa_i$.
Equation~\eqref{eq:local-class} gives $\omega_i=2\pi\theta_i+d\beta_i$. Near the boundary, $d\beta_i=d\alpha_i$, Stokes' theorem and the interior support of $\theta_i$ yield
\begin{equation}\label{eq:relative-volume}
\int_{W_i}\omega_i^2
=(2\pi)^2\kappa_i^2
+\int_{\partial W_i}\alpha_i\wedge d\alpha_i.
\end{equation}

If the boundary orientations were opposite, then
\[
0<\int_{W_i}\omega_i^2
=(2\pi)^2\kappa_i^2-B_{r,i}<0,
\]
a contradiction. Thus the boundary orientations agree.
Since $\psi_i$ preserves the ambient orientation, it maps
the outer collar into $V\setminus W_i$. If $\kappa_i$ were nonzero over $\mathbb R$, then
\[
0<\int_{W_i}\omega_i^2
\le -\frac{(2\pi)^2}{h}+B_{r,i}<0.
\]
Hence $c_1(W_i;\mathbb R)=0$, and \eqref{eq:local-class}
gives $[\omega_i|_{W_i}]=0$.
\end{proof}
Let $N$ be the compact minimal-resolution neighborhood of one
of the small quotient balls. Its exceptional divisor is a tree
of smooth rational curves $E_1,\ldots,E_\ell$ with negative-definite
intersection matrix. Write
\[
 E_\alpha^2=-b_\alpha,\qquad b_\alpha\ge2,\qquad
 K_N=\rho^*K_X+A,\qquad A=\sum_{\alpha=1}^{\ell}a_\alpha E_\alpha.
\]
The lower bounds $a_\alpha>-1$ follow from the klt condition. Since $|\Gamma|<\infty$, we can select a $m\in \mathbb{Z}_{>0}$ s.t. $mK_X\sim0$ in a small neighborhood, thus $c_1(K_N)=[A] \in H^2(N;\Q)$. Adjunction gives
\begin{equation}\label{eq:discrepancy-square}
 A\cdot E_\alpha=b_\alpha-2,\qquad
 A^2=\sum_{\alpha=1}^{\ell}a_\alpha(b_\alpha-2).
\end{equation}
\begin{lemma}\label{lem:replacement}
    For a fixed sufficiently small link and sufficiently large $i$, then
\begin{equation}\label{eq:replacement}
    b_2(W_i)=\ell+A^2.
\end{equation}
\end{lemma}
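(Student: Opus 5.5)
The plan is to compare $W_i$ with the minimal-resolution neighborhood $N$ through the Gompf $\theta$-invariant of the plane field of complex tangencies on the common boundary $S^3/\Gamma$. Recall Gompf's theorem: let $(Z,J)$ be a compact almost complex $4$-manifold whose boundary $\partial Z$ is a rational homology sphere, and let $\xi=T\partial Z\cap JT\partial Z$ be the induced oriented plane field. Then
\[
\theta(\xi)=c_1^2(Z,J)-3\sigma(Z)-2\chi(Z)
\]
depends only on the homotopy class of $\xi$ as an oriented plane field on $\partial Z$. Here $c_1^2$ is computed with a rational relative lift, exactly as $\kappa_i^2$ in the proof of Lemma~\ref{lem:exact}. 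I will evaluate $\theta$ on both $W_i$ and $N$, check that the two boundary plane fields are homotopic, and equate.

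\textbf{Computing $\theta$ on each side.} For $W_i$, Lemma~\ref{lem:exact} gives $c_1(W_i;\R)=0$, so $c_1^2(W_i)=0$. By Lemma~\ref{lem:W-topology}, $H_1=H_3=0$ rationally and $Q_{W_i}$ is negative definite. Since $W_i$ is connected with nonempty boundary, $b_4=0$. Hence
\[
\sigma(W_i)=-b_2(W_i),\qquad \chi(W_i)=1+b_2(W_i),
\]
and therefore $\theta=b_2(W_i)-2$.

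For $N$, the exceptional curves form a tree of rational curves, so $N$ deformation retracts onto a simply connected configuration. This gives $b_2(N)=\ell$, $b_1=b_3=0$, $\sigma(N)=-\ell$ by negative definiteness, and $\chi(N)=1+\ell$. Since $c_1(N)=-c_1(K_N)=-[A]$ in $H^2(N;\Q)$, we get $c_1^2(N)=A^2$, which \eqref{eq:discrepancy-square} computes explicitly. Hence $\theta=A^2+\ell-2$. Once the invariants are shown to agree, equating the two values gives \eqref{eq:replacement}.

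\textbf{Matching the boundary plane fields.} On $N$, the resolution map is a biholomorphism away from the exceptional divisor. So $\partial N$, taken as the preimage of $L_r$, carries exactly the complex-tangency field $\xi$ of $L_r$ in the orbifold complex structure $J$, with the same complex boundary orientation.

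On $W_i$, $\psi_i$ identifies $L_r$ with $L_{r,i}$, and the pulled-back field $\psi_i^*\xi_i$ is the complex tangency of $\psi_i^*J_M$ along $L_r$. Because $\psi_i^*J_M\to J$ smoothly on a collar, these plane fields converge smoothly to $\xi$, so they are homotopic for large $i$. The orientations also agree: the proof of Lemma~\ref{lem:exact} showed that the boundary orientation of $W_i$ induced by the complex orientation coincides with the one transported by $\psi_i$, and that $W_i$ lies on the inner side. Thus the outward normals correspond, and so do the orientations of $\xi$. This identification uses $\psi_i$ as an orientation-preserving diffeomorphism $\partial N\cong\partial W_i$, which is all the invariance of $\theta$ requires.

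\textbf{Main obstacle.} I expect the delicate step to be the precise invariance statement. One must make sure that the $c_1^2$ term is well defined with rational coefficients when $H^2(L;\Z)$ is torsion, and that it is computed with the same conventions on both sides. It also has to be checked that only the homotopy class of $\xi$ is needed, rather than an isotopy of contact structures; Gompf's formulation indeed requires only the former. If this becomes problematic, the fallback is to glue: form the closed almost complex manifold $N\cup_{\psi_i}(-W_i)$, or better $W_i\cup$ a fixed concave cap, and compare with $N\cup$ the same cap. Then Noether's formula $c_1^2+c_2\equiv 0$, in the form $c_1^2=2\chi+3\sigma$ for almost complex closed $4$-manifolds, applied to both closed manifolds together with additivity of $\chi$, $\sigma$ (Novikov) and $c_1^2$, yields the same identity.
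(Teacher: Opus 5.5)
Your proposal is correct and is essentially the paper's argument repackaged: the paper deforms $J_M$ near $L_{r,i}$, glues $N$ into $E_i=M\setminus\operatorname{int}W_i$ to form the closed almost complex manifold $\widehat M_i$, and applies $c_1^2=2\chi+3\sigma$ to both $M$ and $\widehat M_i$. This is exactly the capping argument behind Gompf's invariance of $\theta$, with $E_i$ serving as the cap, so your values $\theta=b_2(W_i)-2$ and $\theta=A^2+\ell-2$ reproduce the paper's identity $A^2=r_i-\ell$. The only differences are cosmetic: you need just a homotopy of the boundary plane fields, not a matching of $J$ on a collar. Also, of your two fallbacks only the capped one works; $N\cup(-W_i)$ is not naturally almost complex, since reversing the orientation of $W_i$ breaks compatibility with $J_M$ (in complex dimension two, $-J_M$ induces the same orientation as $J_M$).
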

\begin{proof}
    Resolve an open neighborhood of the closed quotient ball.
The resolution identifies a neighborhood of $\partial N$
biholomorphically with a neighborhood of the link. On $U$,
$\psi_i^*J\to J_X$ smoothly. For large $i$, we can deform $J$
through almost-complex structures, keeping it fixed outside
$\psi_i(U)$, to obtain $J_i'$ satisfying $\psi_i^*J_i'=J_X$ near the link. Thus $J_i'$ is homotopic to $J$ on $M$ and agrees with the
complex structure of the resolution along the gluing collar.

Replacing $W_i$ by $N$ therefore produces a closed oriented
almost-complex manifold $\widehat M_i=(M\setminus\operatorname{int}W_i)\cup N$. Set $E_i=M\setminus\operatorname{int}W_i$.
The new structure on this common exterior is homotopic to
the original one, so its first Chern class is unchanged.
Since the common boundary is a rational homology sphere,
Mayer--Vietoris sequence gives orthogonal decompositions
\[
 H^2(M;\Q)=H^2(E_i;\Q)\oplus H^2(W_i;\Q),\qquad
 H^2(\widehat M_i;\Q)=H^2(E_i;\Q)\oplus H^2(N;\Q).
\]
The Chern class on $W_i$ vanishes over $\R$ by
Lemma~\ref{lem:exact}, and $c_1(TN)=-[A]$. Consequently
\[
 c_1(\widehat M_i)^2-c_1(M)^2=A^2.
\]

Lemma~\ref{lem:W-topology} gives
$\chi(W_i)=1+r_i$ and $\sigma(W_i)=-r_i$.
The resolution neighborhood retracts onto its exceptional tree,
so $\chi(N)=1+\ell$ and $\sigma(N)=-\ell$. So
\[
 \chi(\widehat M_i)-\chi(M)=\ell-r_i,\qquad
 \sigma(\widehat M_i)-\sigma(M)=-\ell+r_i.
\]
For every closed almost-complex four-manifold,
$c_1^2=2\chi+3\sigma$.
Then we have 
$A^2=2(\ell-r_i)+3(-\ell+r_i)=r_i-\ell$.
\end{proof}

Let $o_1,\ldots,o_s$ be the singular points of $X$. Write
\[
F_0=\sum \nolimits_{j=1}^k m_jD_j,\qquad
Q=(D_j\cdot D_l)_{jl},\qquad
m=(m_1,\ldots,m_k)^T.
\]
Since $F_0$ is principal, $F_0\cdot D_j=0$ for every $j$,
so $Qm=0$. For any $v\in\mathbb Q^k$, this gives
\begin{equation}\label{eq:graph}
v^TQv
=-\sum_{j<l}(D_j\cdot D_l)m_jm_l
\left(\frac{v_j}{m_j}-\frac{v_l}{m_l}\right)^2.
\end{equation}
Distinct components have nonnegative intersection.
Since $F_0$ is connected, equality holds precisely when
all ratios $v_j/m_j$ are equal. Thus
\[
Q\le0,\qquad \ker_{\mathbb Q}Q=\mathbb Qm.
\]
\begin{proposition}\label{prop:k}
For sufficiently small fixed links around the singular points
and large $i$, the regions $W_{\nu,i}$ are
pairwise disjoint and
\begin{equation}\label{eq:combined-rank}
  k+\sum_{\nu=1}^{s}b_2(W_{\nu,i})\le b_2(V)\le2.
\end{equation}
\end{proposition}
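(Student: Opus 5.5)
We argue by producing, inside $V$, a family of rationally independent homology classes of size $k+\sum_\nu b_2(W_{\nu,i})$. Here $V$ is the fiber neighborhood from \eqref{eq:V} that contains $\psi_i(K)$, where $K\subset X$ is a compact neighborhood of the central fiber $F_0=\pi^{-1}(o)$.

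\textbf{Disjointness.} Choose the links $L_{\nu,r}$ around $o_1,\dots,o_s$ with $r$ smaller than half the minimal distance between the singular points. Then the closed quotient balls $\overline{\Omega_{\nu,r}}$ are disjoint, and so are their images $\psi_i(L_{\nu,r})$. The region $W_{\nu,i}$ is the relatively compact side of $\psi_i(L_{\nu,r})$ in $V$ (Lemma~\ref{lem:W-topology}). I claim it is the side containing the image of a punctured neighborhood of $o_\nu$, i.e.\ the side whose boundary orientation agrees with that of $L_{\nu,r}$. This is exactly the orientation conclusion obtained in the proof of Lemma~\ref{lem:exact}.

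Now suppose two of these regions met. Since their boundaries are disjoint, one would contain the other, say $W_{\mu,i}\subset W_{\nu,i}$. But the outer collar of $L_{\nu,r}$ is mapped into $V\setminus W_{\nu,i}$, and $\psi_i(L_{\mu,r})$ lies in the image of the exterior $X\setminus \overline{\Omega_{\nu,r}}$. Following a path in $X_{\reg}\setminus\bigcup\Omega$ from the outer collar of $L_\nu$ to $L_\mu$ (which exists because $X$ is connected and the balls are small) keeps $\psi_i(L_{\mu,r})$ outside $W_{\nu,i}$. This is a contradiction, so the regions are pairwise disjoint.

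\textbf{Rank count.} Let $D_j^\circ=D_j\setminus\bigcup_\nu\Omega_{\nu,r}$ be the part of each component of $F_0$ lying in the smooth region. By Lemma~\ref{lem:exact}, $\omega_i$ is exact on each $W_{\nu,i}$, so we may cap each boundary circle of $\psi_i(D_j^\circ)$ on $L_{\nu,r,i}$. Since $H_1(L;\Q)=0$, some multiple of each boundary circle bounds a $2$-chain inside $L_{\nu,r,i}$, and we cap with such a chain. This produces rational $2$-cycles $\widetilde D_{j,i}$ in $\psi_i(K)\cup\bigcup_\nu W_{\nu,i}$. Their intersections with one another, and with cycles supported in the interiors of the $W_{\nu,i}$, can be computed as follows:
\begin{itemize}
\item Cycles supported in the interior of $W_{\nu,i}$ have zero intersection with the capped $\widetilde D_{j,i}$, since the caps lie on the boundary links and can be pushed off.
\item $\widetilde D_{j,i}\cdot\widetilde D_{l,i}=D_j\cdot D_l+(\text{local rational corrections at }o_\nu)$.
\end{itemize}

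A cleaner route is to use the Mayer--Vietoris splitting over the rational homology spheres:
\[
H_2(V;\Q)=H_2\Bigl(V\setminus\textstyle\bigcup\operatorname{int}W_{\nu,i};\Q\Bigr)\oplus\bigoplus_\nu H_2(W_{\nu,i};\Q).
\]
It then suffices to show that the exterior piece $E$ has $b_2(E)\ge k$. Gluing in the minimal resolutions $N_\nu$ as in Lemma~\ref{lem:replacement} gives a space $E\cup\bigcup N_\nu$. This space contains the strict transforms of the $D_j$ together with all the exceptional curves, and these span a subspace of dimension at least $k+\sum_\nu\ell_\nu-1$ modulo the fiber class, in fact of dimension $k+\sum\ell_\nu$, since the pullback of the fiber is a nonzero class. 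Hence
\[
b_2(E)\ge k+\textstyle\sum\ell_\nu-\sum\ell_\nu=k.
\]
Combining this with $b_2(V)\le 2$ from \eqref{eq:V} gives \eqref{eq:combined-rank}.

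\textbf{Main obstacle.} The delicate point is independence: showing that the $D_j$ classes, or their resolved versions, are linearly independent in homology. Compactly supported curves in an open $V$ could be homologous to zero. I would detect them via the intersection pairing with compact classes, using that on the resolution the intersection matrix of $\{\widetilde D_j\}\cup\{E_{\nu\alpha}\}$ is negative semidefinite with kernel spanned only by the pulled-back fiber $\rho^*F_0$. I would then check that $\rho^*F_0$ is itself nonzero in $H_2(E\cup\bigcup N_\nu;\Q)$ by pairing it against $[\omega_i]$, whose integral is $4\pi>0$ by \eqref{eq:local-class}. Together, the pairing information and this positivity give full rank $k+\sum\ell_\nu$.
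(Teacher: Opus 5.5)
Your proposal is correct. Your disjointness step is essentially the paper's: the orientation conclusion from the proof of Lemma~\ref{lem:exact} puts the outer collars outside the $W_{\nu,i}$, and a connected set in the exterior of the balls joining the collars rules out nesting.

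The rank count takes a different route. The paper never modifies $V$. It caps each $D_j$ by rational chains on auxiliary links $L_\nu^+$ to get cycles $\beta_j$ in the smooth part of $X$. It proves via the numerical pullback that $\beta_j\cdot\beta_l$ equals the Mumford pairing $D_j\cdot D_l$ on the normal surface. It then uses $\ker Q=\Q m$ and $\int_{F_0}\omega>0$ to show that $\Phi_i\colon\Q^k\to H_2(V;\Q)$ is injective with image orthogonal to the negative-definite sum of the $H_2(W_{\nu,i};\Q)$.

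You instead split $H_2(V;\Q)=H_2(E;\Q)\oplus\bigoplus_\nu H_2(W_{\nu,i};\Q)$ by Mayer--Vietoris and perform the surgery $\widehat E=E\cup\bigcup_\nu N_\nu$ already used in Lemma~\ref{lem:replacement}. You find $k+\sum_\nu\ell_\nu$ independent classes in $\widehat E$ from Zariski's lemma for the full resolved fiber $\rho^*F_0$ of $\pi\circ\rho$, plus one positivity pairing. Then you subtract $\sum_\nu b_2(N_\nu)=\sum_\nu\ell_\nu$. Your route works only with smooth curves on a smooth surface, so it bypasses the numerical-pullback step and rational intersection theory on $X$. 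The paper's route avoids a second surgery and produces explicit cycles $\psi_i(\beta_j)$ in $V$.

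Two points should be made explicit in your write-up.

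First, embedding $\rho^{-1}(K)$ into $\widehat E$ requires $\psi_i\bigl(K\setminus\bigcup_\nu\Omega_{\nu,r}\bigr)\subset E$. This follows from the same connectivity argument as your disjointness step.

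Second, the nonvanishing of $\rho^*F_0$ is not supplied by \eqref{eq:local-class}. Take $K=\pi^{-1}(\overline{D'})$ for a disk $D'$ large enough that some general fiber $G\subset K$ misses the balls. Then $\rho^*F_0$ is homologous to $G$ in $\rho^{-1}(K)$, its image is $[\psi_i(G)]\in H_2(E;\Q)\hookrightarrow H_2(\widehat E;\Q)$, and $\int_{\psi_i(G)}\omega_i\to\int_G\omega>0$. This also removes any need to extend $\omega_i$ across the $N_\nu$.

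Your abandoned capping sketch predicts nonzero ``local rational corrections.'' These actually vanish, which is exactly what the paper's numerical-pullback step proves, but your argument does not rely on them.
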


\begin{proof}
Choose quotient balls $B_\nu\Subset B_\nu^+$ around $o_\nu$,
with pairwise disjoint closures $\overline{B_\nu^+}$.
Set $L_\nu=\partial B_\nu$ and $L_\nu^+=\partial B_\nu^+$,
and choose $L_\nu^+$ transverse to every $D_j$.
Replace each part of $D_j$ inside $B_\nu^+$ by a rational
two-chain on $L_\nu^+$ with the same oriented boundary.
Such chains exist because $H_1(L_\nu^+;\mathbb Q)=0$.
This gives rational two-cycles $\beta_j$
outside $\textstyle\bigcup_\nu\overline{B_\nu}$.

Let $K$ be a compact neighborhood of the curves and balls,
and let $\widetilde K$ be its preimage under a resolution.
Write $N_\nu$ for the preimage of $\overline{B_\nu^+}$
and $K^\circ$ for their common exterior, identified with
the corresponding subset of $X_{\mathrm{reg}}$.
Put $L^+=\textstyle \bigsqcup_\nu L_\nu^+$.
Since $H_1(L^+;\mathbb Q)=H_2(L^+;\mathbb Q)=0$,
Mayer--Vietoris sequence gives the orthogonal decomposition
\[
H_2(\widetilde K;\mathbb Q)
=
H_2(K^\circ;\mathbb Q)
\oplus\bigoplus_\nu H_2(N_\nu;\mathbb Q).
\]
The exact sequence of the pair gives $H_2(K^\circ;\mathbb Q)
\cong H_2(K^\circ,L^+;\mathbb Q)$, so $[\beta_j]$ is the unique lift of the truncated curve.
The numerical pullback class $P_j$ of $D_j$ therefore has
the form
\[
P_j=[\beta_j]+u_j,
\qquad
u_j\in\bigoplus_\nu H_2(N_\nu;\mathbb Q).
\]
It is orthogonal to every exceptional curve.
The exceptional intersection form is negative definite,
hence $u_j=0$ and
$D_j\cdot D_l=P_j\cdot P_l=\beta_j\cdot\beta_l$.

Write $\omega=d\alpha_\nu$ near $\overline{B_\nu^+}$.
The removed curve piece and its replacement have the same
oriented boundary, so Stokes' theorem gives equal integrals.
Thus
\begin{equation}\label{eq:capped-pairings}
\beta_j\cdot\beta_l=D_j\cdot D_l,
\qquad
\int_{\beta_j}\omega=\int_{D_j}\omega.
\end{equation}

Choose disjoint two-sided collars of $L_\nu$ and a point
$p_\nu$ in the outer part of each collar.
Join these points, the cycles $\beta_j$, and their intersection
perturbations by paths outside
$\textstyle\bigcup_\nu\overline{B_\nu}$.
Let $C$ be the resulting compact connected set.
For large $i$, the map $\psi_i$ is defined near $C$ and
all the collars, with image in $V$.

By Lemma~\ref{lem:exact}, $\psi_i(p_\nu)$ lies outside
$W_{\nu,i}$. Since $\psi_i(C)$ is connected and avoids
$L_{\nu,i}$, it lies outside $W_{\nu,i}$ for every $\nu$.
If $W_{\mu,i}\subset\operatorname{int}W_{\nu,i}$,
the collar of $L_{\mu,i}$ would lie inside $W_{\nu,i}$,
since it is connected and avoids $L_{\nu,i}$.
This contradicts $\psi_i(p_\mu)\in\psi_i(C)$.
The compact sides of disjoint separating links are disjoint
or nested. Hence the $W_{\nu,i}$ are pairwise disjoint,
and every $\psi_i(\beta_j)$ lies in their common exterior.

Define
\[
\Phi_i:\mathbb Q^k\longrightarrow H_2(V;\mathbb Q),
\qquad
\Phi_i(e_j)=(\psi_i)_*[\beta_j].
\]
Their intersection matrix is $Q$, so $\ker\Phi_i\subset\ker Q=\mathbb Qm$ and
\[
\int_{\Phi_i(m)}\omega_i
=
\sum_jm_j\int_{\beta_j}\psi_i^*\omega_i
\longrightarrow
\int_{F_0}\omega>0.
\]
Thus $\Phi_i(m)\ne0$ for large $i$, and $\Phi_i$ is injective.

Lemma~\ref{lem:W-topology} identifies each
$H_2(W_{\nu,i};\mathbb Q)$ with a negative-definite subspace
of $H_2(V;\mathbb Q)$. These subspaces are mutually orthogonal
and form a direct sum $U_i$. The space
$A_i=\operatorname{im}\Phi_i$ is orthogonal to $U_i$,
because its cycles lie outside the regions $W_{\nu,i}$.
Negative definiteness gives $A_i\cap U_i=0$. Therefore
\[
k+\sum_{\nu=1}^s b_2(W_{\nu,i})
=\dim A_i+\dim U_i
\le b_2(V)\le2.
\]
\end{proof}

\subsection{Removable singularities and smooth limit}
Combining the $\PP^1$ fibration structure and the local topology near the singular fiber, we can prove the orbifold singularities of $X$ are removable.
\begin{proposition}
    The tangent flow $X$ is smooth shrinking cylinder $\mathbb{P}^1\times \mathbb{C}$ or BCCD shrinker $\operatorname{Bl}_p(\mathbb{P}^1\times \mathbb{C})$.
\end{proposition}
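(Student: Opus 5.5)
We argue by contradiction on the singular set, and then appeal to the classification of smooth shrinkers. Suppose $X_{\sin}\neq\emptyset$, and let $o_1,\dots,o_s$ with $s\ge1$ be its points. By the last proposition of \S3.1 they all lie on the central fiber $F_0=\pi^{-1}(o)$. Each $o_\nu$ is a genuine quotient singularity $\C^2/\Gamma_\nu$ with $\Gamma_\nu\neq1$. Its minimal resolution therefore has $\ell_\nu\ge1$ exceptional curves.

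\textbf{Step 1: each singular point forces positive $b_2$.} By Lemma~\ref{lem:replacement}, $b_2(W_{\nu,i})=\ell_\nu+A_\nu^2$. The klt condition gives $a_\alpha>-1$. For a quotient singularity the minimal resolution has no $(-1)$-curves, so $b_\alpha\ge2$. Moreover $A_\nu\cdot E_\alpha=b_\alpha-2\ge0$, and the intersection form is negative definite, so $A_\nu$ is anti-nef. This forces $a_\alpha\le0$. Combining these with \eqref{eq:discrepancy-square},
\[
A_\nu^2=\sum_\alpha a_\alpha(b_\alpha-2)>-\sum_\alpha(b_\alpha-2).
\]
This alone does not give positivity. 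Instead I would use the integrality already in hand: $b_2(W_{\nu,i})\ge0$, and by Lemma~\ref{lem:W-topology} $b_2(W_{\nu,i})\le1$. I then split into cases.

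\emph{Case $b_2(W_{\nu,i})=0$.} Then $W_{\nu,i}$ is a rational homology ball whose boundary is the link $S^3/\Gamma_\nu$. The identity $\ell_\nu=-A_\nu^2$ constrains $\Gamma_\nu$ to the Wahl-type cyclic groups $\frac1{n^2}(1,na-1)$. These are T-singularities of class $T_0$.

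\emph{Case $b_2(W_{\nu,i})=1$.} Proposition~\ref{prop:k} already leaves little room here.

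\textbf{Step 2: count ranks via Proposition~\ref{prop:k}.} We have $k+\sum_\nu b_2(W_{\nu,i})\le2$ and $k\ge1$. I would next use the blowup description of the fibration over $\A^1$. After resolving the $o_\nu$, the map $\widetilde X\to\A^1$ is a proper map with connected fibers and general fiber $\PP^1$. By Lemma~\ref{bea} it is an iterated blowup of a $\PP^1$-bundle, so its central fiber has $k+\sum_\nu\ell_\nu=1+(\#\text{blowups})$ components. A fiber of an iterated blowup always contains a $(-1)$-curve when it is reducible. No exceptional $E_\alpha$ of a minimal resolution is a $(-1)$-curve, so some strict transform $D_j$ must be one. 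Counting: $k\le2$. If $k=1$, the lone component $D_1$ passes through every $o_\nu$, and its strict transform is the $(-1)$-curve. The chain containing it must then contract to a smooth fiber. A Hirzebruch–Jung chain $[b_1,\dots]$ attached to a $(-1)$-curve contracts down to a $0$-curve only for specific strings. This is the classical statement that singular fibers of $\PP^1$-fibrations on normal surfaces yield multiplicity $m_1>1$ exactly when singular points lie on the fiber. Multiplicity $m_1>1$ contradicts the class computation: Lemma~\ref{lem:exact} together with $\int_G\omega_i=4\pi$ passes to the limit, giving $\int_{F_0}\omega=4\pi$, and $-K\cdot F_0=2$ forces $m_1=1$ with no singularities. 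If $k=2$, then $\sum_\nu b_2(W_{\nu,i})=0$, and every $o_\nu$ is a $T_0$ point. I would then show, by the same contraction argument, that two components together with Wahl chains cannot have total $-K\cdot F_0=2$ and $F_0^2=0$ unless there are no chains.

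\textbf{Main obstacle and conclusion.} The hardest step is the combinatorial exclusion of rational-homology-ball ($T_0$) singularities, where $b_2(W_{\nu,i})=0$ and the rank bound gives nothing. There I would use the Chern-number identity of Lemma~\ref{lem:replacement}, $c_1(\widehat M_i)^2=c_1(M)^2+A^2$, compared against $K_{\widetilde X}$ on the resolved fibration. The goal is to derive a strict inequality incompatible with the fiber being an iterated blowup with $\le2$ non-exceptional components. Once $X_{\sin}=\emptyset$, $X$ is a smooth Kähler–Ricci shrinker with bounded curvature (Proposition~\ref{pr-bounded}). It is fibered over $\C$ by $\PP^1$'s with central fiber having $k\le2$ components, and by Lemma~\ref{bea} it is either $\PP^1\times\C$ or its one-point blowup. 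The classification of \cite{BCCD,LW26} then identifies it as the shrinking cylinder or the BCCD shrinker.
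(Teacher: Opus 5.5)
There is a genuine gap: your proposal never derives a contradiction from $s\ge1$, and the one case you claim to close rests on a false step. For $k=1$ you assert that $\int_{F_0}\omega=4\pi$ and $-K_X\cdot F_0=2$ force $m_1=1$. The first identity holds for every shrinker, since $[\omega]=2\pi c_1(X)$. On a singular surface $K_X\cdot D_1$ is only rational, so $m_1(-K_X\cdot D_1)=2$ gives nothing. For instance, $(\PP^1\times\C)/\Z_2$ with $(z,w)\mapsto(-z,-w)$ fibres over $\C/\Z_2\cong\A^1$ with central fibre $2D$; here $D$ passes through two $A_1$ points and $-K_X\cdot D=1$. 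What rules such configurations out is the rank bound of Proposition~\ref{prop:k}, coupled to the fibre combinatorics. In this example $b_2(W_\nu)=\ell_\nu+A_\nu^2=1$ for each point, so $k+R=3>2$. You never make that coupling. The case $b_2(W_{\nu,i})=1$ is dismissed with ``little room'', and the cases $k=2$ and $b_2(W_{\nu,i})=0$ are only announced as goals. Your claim that $b_2(W_{\nu,i})=0$ forces a Wahl cyclic quotient is unproved, since it amounts to classifying rational homology ball fillings, and it is not needed anyway.

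The missing ingredient is one counting identity that treats all cases at once. Sum $b_2(W_\nu)=\ell_\nu+A_\nu^2$ over $\nu$ and use $A_\nu^2=\sum_\alpha a_{\nu\alpha}(b_{\nu\alpha}-2)$. This gives $B:=\sum_{\nu,\alpha}b_{\nu\alpha}=3L-R+\varepsilon$, where $L=\sum\ell_\nu$, $R=\sum b_2(W_\nu)$ and $\varepsilon=\sum(1+a_{\nu\alpha})(b_{\nu\alpha}-2)\ge0$. Points with $b_2(W_\nu)=0$ are therefore not invisible to the rank bound. They have $A_\nu^2=-\ell_\nu<0$, so some $b_{\nu\alpha}>2$ and hence $\varepsilon>0$. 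Next, compute the trace of the intersection matrix of the reduced central fibre of the minimal resolution in two ways. Directly, it is $-C-B$, where $\widetilde D_j^2=-c_j$ and $C=\sum_j c_j\ge k$. Via Lemma~\ref{bea}, it is $-2u-3v$, where $u$ and $v$ count blow-ups at smooth points and at nodes, with $u\ge1$ and $u+v=k+L-1$. Together with $k+R\le2$ this yields
\[
1\le u=3k-3-C+R-\varepsilon\le k-1-\varepsilon\le1-\varepsilon .
\]
Hence $k=2$, $R=0$ and $\varepsilon=0$. Then every $b_{\nu\alpha}=2$, so $A_\nu^2=0$ and $\ell_\nu=b_2(W_\nu)=0$, contradicting $L>0$. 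Your concluding appeal to \cite{LW26} once $X$ is smooth is fine.
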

\begin{proof}
    Suppose $X$ has $s\ge1$ singularities which are contained in $\pi^{-1}(o)$. Fix the small neighborhoods and
one common sufficiently large convergence index from
Proposition~\ref{prop:k}. Write
$r_\nu=b_2(W_\nu)$, and set
\[
 L=\sum_{\nu=1}^{s}\ell_\nu>0,\qquad
 R=\sum_{\nu=1}^{s}r_\nu,\qquad
 B=\sum_{\nu,\alpha}b_{\nu\alpha},\qquad
 \varepsilon=\sum_{\nu,\alpha}
       (1+a_{\nu\alpha})(b_{\nu\alpha}-2)\ge0.
\]
The nonnegativity uses $a_{\nu\alpha}>-1$ and
$b_{\nu\alpha}\ge2$. Summing \eqref{eq:replacement} and using
\eqref{eq:discrepancy-square} gives
\begin{equation}\label{eq:weight-sum}
 B=3L-R+\varepsilon.
\end{equation}

Let $\rho:\widetilde X\to X$ be the minimal resolution.
The map $\pi\circ\rho$ is proper with connected fibers and
general fiber $\PP^1$, the central reduced support has $k+L\ge2$ components.
Write $\widetilde D_j^2=-c_j$ for the strict transforms of the
original $k$ components. Each component of a reducible fiber
has negative square, hence
$c_j\ge1$ and $C:=\textstyle\sum\nolimits_j c_j\ge k$.
The trace of the reduced-support intersection matrix is $-C-B$.

By lemma \ref{bea}, the same fiber is obtained from a smooth fiber $\PP^1$ by
$k+L-1$ point blowups. If $u$ centers are smooth support points
and $v$ are nodes, then
\[
 u+v=k+L-1,\qquad
 -C-B=-2u-3v=-3(k+L-1)+u.
\]
The first blowup is at a smooth point, so $u\ge1$.
Substitution of \eqref{eq:weight-sum} and
\eqref{eq:combined-rank} yields
\begin{equation}\label{eq:trace-obstruction}
 1\le u
 =3k-3-C+R-\varepsilon
 \le2k-3+R-\varepsilon
 \le k-1-\varepsilon
 \le1-\varepsilon.
\end{equation}
Thus 
$k=2$, $R=0$, and $\varepsilon=0$.
Since every $1+a_{\nu\alpha}$ is strictly positive,
$\varepsilon=0$ forces every $b_{\nu\alpha}=2$.
Equation~\eqref{eq:discrepancy-square} then gives $A_\nu^2=0$,
so \eqref{eq:replacement} gives $r_\nu=\ell_\nu$.
This contradicts $0=R=L>0$.
Therefore $X$ is smooth. Because $X$ is a smooth $\PP^1$-fibration over $\mathbb{C}$, the classification \cite[Theorem 1.1]{LW26} implies $X$ is shrinking cylinder or BCCD shrinker.
\end{proof}
To exclude the shrinking cylinder, we first show that along a
sequence of scales, the $H_4$-centers lie a bounded distance
from singular fiber $F_q$ in the rescaled metrics.
Fix $x_0\in F_q$ and conjugate heat kernel
$K(x_0,T;\cdot,\cdot)$ obtained along a sequence $t_j\nearrow T$.
For each $\tau\in(0,T)$, let $(z_\tau,T-\tau)$ be an
$H_4$-center of $(x_0,T)$.
\begin{lemma}\label{lem:visible-subsequence}
There exist $A<\infty$ and $\tau_i\searrow0$ such that
\[
 d_{g(T-\tau_i)}(z_{\tau_i},F_q)\le A\sqrt{\tau_i}.
\]
\end{lemma}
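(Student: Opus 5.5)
We argue by contradiction. Suppose that for every $A<\infty$ there is $\tau_A>0$ such that $d_{g(T-\tau)}(z_\tau,F_q)>A\sqrt{\tau}$ for all $\tau\in(0,\tau_A)$. Equivalently, the rescaled distance $\delta(\tau):=\tau^{-1/2}d_{g(T-\tau)}(z_\tau,F_q)$ tends to infinity as $\tau\searrow0$. The plan is to show that the conjugate heat kernel based at $(x_0,T)$ cannot drift away from the fiber through $x_0$ at every scale. The basic input is that $x_0\in F_q$ itself, so at scales comparable to the original time the kernel is concentrated near $F_q$. In addition, Bamler's $H_n$-centers move in a controlled way in time: for $s<t$, the $W_1$-distance between $\nu_{x_0,T;s}$ and the pushforward of $\nu_{x_0,T;t}$ is controlled, and $H_4$-centers of the same basepoint at times $T-\tau$ and $T-\tau'$ with $\tau'\le 2\tau$ are at $g(T-\tau')$-distance $\le C\sqrt{\tau'}$ apart after comparison (cf. \cite[Section 3]{Bam20a}).

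\textbf{Step 1: pass to a limit.} Rather than a purely metric argument, I would use the tangent-flow picture. Take any $\tau_i\searrow0$. By the preceding proposition, the rescaled flows $\mathbb F$-converge, along a subsequence, to a smooth shrinker $X$, either $\PP^1\times\C$ or $\Bl_p(\PP^1\times\C)$. Moreover the limit conjugate heat kernel $\nu_{x_\infty;t}$ is the Gaussian-type soliton measure $(4\pi|t|)^{-2}e^{-f}dg_t$. Its $H_4$-centers at time $-1$ lie within bounded distance of the minimum set of $f$. Under the smooth convergence $\psi_i$ on the exhaustion $U_i$, this gives bounded points $y\in X$ with $\psi_i(y)$ within rescaled distance $C$ of $z_{\tau_i}$. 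The $W_1$-closeness needed for this comes from \cite[Theorem 2.18]{Bam20b} and the correspondence $\mathfrak C$.

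\textbf{Step 2: show $F_q$ is visible in the limit.} The key claim is that the region near $F_q$, transported by $\psi_i^{-1}$, stays in a bounded subset of $X$. The argument would use the fibration. Regular fibers near $\psi_i(y)$ have area $4\pi$ in $\omega_i$ and converge to fibers of $\pi:X\to\C$ (via Lemma~\ref{lem:curves}). Any fiber $p^{-1}(q')$ meeting a bounded region of the limit has $\psi_i^{-1}$-image within bounded distance. Next we measure where $\psi_i(y)$ sits in the base $\Sigma$ at scale $\sqrt{\tau_i}$. The base direction is not collapsed, so the metric $\omega_i$ restricted transversally is comparable to $\tau_i^{-1}\omega_\Sigma$ plus a bounded error. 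This uses the known estimate $\omega(t)\ge c\,p^*\omega_\Sigma$ and the fiber-direction bounds from \cite{XZ} away from $\Delta$. Hence $\delta(\tau_i)\to\infty$ forces $d_\Sigma(p(z_{\tau_i}),q)\gg\sqrt{\tau_i}$.

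\textbf{Step 3: contradict the base drift.} This is the main obstacle: proving that the base projection of the center stays within $O(\sqrt{\tau})$ of $q=p(x_0)$ along some sequence. First I would consider the pushforward measures $p_*\nu_{x_0,T;T-\tau}$ on $\Sigma$. The base metric is essentially static, since $\omega(t)$ restricted horizontally converges to $p^*\omega_T$ with $\omega_T$ a fixed (possibly degenerate only at nothing) form on $\Sigma$. The $W_1$ variance bound then gives $\int d_\Sigma^2(p(z_\tau),\cdot)\,d(p_*\nu)\le C\tau$. Second, as $\tau\to0$ the measures $p_*\nu_{x_0,T;T-\tau}$ concentrate at $p(x_0)=q$. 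This holds because $\nu_{x_0,t_j;s}\to\delta_{x_0}$ as $s\to t_j$, and the $W_1$-distance monotonicity \cite[Lemma 2.35]{Bam20b} transfers this to the limit basepoint. Third, we need concentration at rate $\sqrt\tau$ and not just $o(1)$. For this I would use the Lipschitz property of $p$ from $(M,g(t))$ to $(\Sigma,C\omega_\Sigma)$, which follows from the Schwarz lemma $\operatorname{tr}_{\omega(t)}p^*\omega_\Sigma\le C$. Since $W_1$ distances are contracted under pushforward by Lipschitz maps, $d_{W_1}(p_*\nu_{x_0,T;T-\tau},\delta_{q})$ is bounded by the $W_1$-distance of $\nu_{x_0,T;T-\tau}$ to $\nu_{x,T;T-\tau}$-type measures. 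Bamler's monotonicity then gives $\le C\sqrt{\tau}+\operatorname{diam}$ of the fiber-direction contribution. If the direct estimate only yields $o(1)$, I would select the sequence $\tau_i$ by a pigeonhole argument on dyadic scales, using the center-displacement bound between consecutive dyadic scales. This should produce infinitely many scales where $\delta(\tau_i)\le A$, which is exactly the statement.
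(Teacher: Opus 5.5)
There is a genuine gap, and it is in Step 2. The only general comparison between the evolving metric and the base is the Schwarz estimate $p^*\omega_\Sigma\le C\omega(t)$, i.e.\ $\omega(t)\ge c\,p^*\omega_\Sigma$. It gives $d_{g(t)}(x,F_q)\ge c\,d_\Sigma(p(x),q)$. That is the wrong direction for you: closeness to $F_q$ forces closeness of base points, but not conversely.

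Your implication ``$\delta(\tau_i)\to\infty$ forces $d_\Sigma(p(z_{\tau_i}),q)\gg\sqrt{\tau_i}$'' is the contrapositive of ``base distance $O(\sqrt\tau)$ implies $g(T-\tau)$-distance to $F_q$ of order $O(\sqrt\tau)$''. That needs an \emph{upper} bound on $\omega(t)$ in horizontal directions along a path from $z_\tau$ to $F_q$, and no such bound is available. The estimates of \cite{XZ} hold only over compact subsets of $\Sigma\setminus\Delta$. A base disk of radius $O(\sqrt\tau)$ about $q\in\Delta$ is exactly where nothing is controlled at this stage: the rescaled geometry between the center and $F_q$ could a priori be arbitrarily stretched, which is what the paper is trying to rule out. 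Steps 1--2 do not help either. The $\mathbb F$-limit only sees what lies at bounded rescaled distance from the center, and whether $F_q$ lies there is precisely the content of the lemma.

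Step 3 has the right conclusion, but your route to the rate does not work. The rate of $\nu_{x_0,t_j;s}\to\delta_{x_0}$ is not uniform as $t_j\to T$, so the $W_1$/Lipschitz argument does not give $O(\sqrt\tau)$. A dyadic pigeonhole cannot upgrade an $o(1)$ bound to a rate. The clean fix is to take $u=p^*|s_q|_h^2$. Then $u(x_0)=0$ and $|\Delta_{\omega(t)}u|\le C$ by the Schwarz estimate, so $\int u\,d\nu_{x_0,T;T-\tau}\le C\tau$.

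The missing idea is to detect $F_q$ itself rather than its base point. The paper runs the same computation with $\log u$, for which Poincar\'e--Lelong gives $\sqrt{-1}\partial\bar\partial\log u=2\pi[F_q]-p^*\Theta_h$. Set $L(\tau)=\int\log u\,d\nu_{x_0,T;T-\tau}$ and $I(\tau)=\int_{F_q}K(x_0,T;\cdot,T-\tau)\,\omega(T-\tau)$.
\begin{itemize}
\item The current identity gives $L'(\tau)\le 2\pi I(\tau)+C$.
\item Jensen together with the bound on $\int u\,d\nu$ gives $L(\tau)\le\log\tau+C$.
\item If $\limsup_{\tau\to0}\tau I(\tau)<1/(2\pi)$, integrating the first bound gives $L(\tau)\ge b\log\tau-C$ with $b<1$, contradicting the second. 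Hence $\tau_iI(\tau_i)\ge 1/(4\pi)$ along some $\tau_i\searrow0$.
\item Bamler's Gaussian upper bound $K\le C\tau^{-2}\exp\bigl(-d_{g(T-\tau)}(z_\tau,\cdot)^2/(C\tau)\bigr)$ and $\int_{F_q}\omega(T-\tau)=4\pi\tau$ give $\tau I(\tau)\le C\exp\bigl(-d_{g(T-\tau)}(z_\tau,F_q)^2/(C\tau)\bigr)$. Along $\tau_i$ this yields the lemma.
\end{itemize}
This argument measures heat-kernel mass on $F_q$ directly in the evolving metric. It therefore never needs the horizontal distance control near the singular fiber that your Step 2 assumes.
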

\begin{proof}
Let $s_q$ be the canonical section of $\mathcal O_\Sigma(q)$,
choose a smooth Hermitian metric $h$ and set $u=p^*|s_q|_h^2$, the Poincaré-Lelong formula implies
\begin{align}\label{ple}
 \sqrt{-1}\partial \bar{\partial} \operatorname{log} u=2\pi[F_q]-p^*\Theta_h.
\end{align}
Since \(\Sigma\) is a compact Riemann surface, every \(2\)-form on \(\Sigma\) can be expressed as \(f\omega_\Sigma\) for some smooth function \(f\), the Schwarz estimate $p^*\omega_{\Sigma}\leq C\omega(t)$ implies 
\begin{align*}
  |\Delta_{\omega(t)}u|+
|\operatorname{tr}_{\omega(t)}p^*\Theta_h|\le C,
\end{align*}
Fix $\tau>0$ and let $s=T-\tau$. Direct computation implies
\begin{align*}
  E_j(s)=:\int_M u\,d\nu_{x_0,t_j;s}, \qquad E_j'(s)=-\int_M\Delta_{\omega(s)}u\,d\nu_{x_0,t_j;s}, \qquad E_j(t_j)=u(x_0)=0,
\end{align*}
Integrating we have \(0\le E_j(s)\le C(t_j-s)\), take limit we obtain $\textstyle \int_M u(y)\,d\nu_{x_0,T;T-\tau}(y)\le C\tau$. The Jesen inequality implies
\begin{align}\label{log}
  L(\tau):=\int_M\log u\,d\nu_{x_0,T;T-\tau}(y)
\le \log\!\left(\int_Mu\,d\nu_{x_0,T;T-\tau}(y)\right)
\le\log\tau+C.
\end{align}
By the direct computation and (\ref{ple})
\begin{align*}
L'(\tau)
&=\int_M
\operatorname{log}u \,
\Delta_{\omega(T-\tau)}
K(x_0,T;y,T-\tau)\,
dV_{g(T-\tau)}(y)\\
&=2\pi\int_F
K(x_0,T;y,T-\tau)\,\omega(T-\tau)-
\int_M
\operatorname{tr}_{\omega(T-\tau)}(p^*\Theta_h)\,
d\nu_{x_0,T;T-\tau}(y)\\
&\leq 2\pi\int_F
K(x_0,T;y,T-\tau)\,\omega(T-\tau)+C.
\end{align*}
We set $I(\tau)=\textstyle \int_F K(x_0,T;y,T-\tau)\,\omega(T-\tau)$. If $\limsup\nolimits_{\tau\to 0}\tau I(\tau)<1/(2\pi)$,
then $L'(\tau)\le b/\tau+C$ for some $b<1$ and all small $\tau$.
Integration gives $L(\tau)\ge b\log\tau-C$, contradicting
\eqref{log}. Thus we can choose $\tau_i\searrow0$
with $\tau_iI(\tau_i)\ge1/(4\pi)$. 
Applying \cite[Theorem 7.2]{Bam20a} to the approximating
kernels based at $t_j<T$, using the uniform entropy lower
bound, and passing to the limit gives
\begin{equation}\label{eq:visibility-gaussian}
 K(x_0,T;y,T-\tau)\le C\tau^{-2}
 \exp\!\left(-\frac{d_{g_{(T-\tau)}}(z_\tau,y)^2}{C\tau}\right).
\end{equation}
Two $H_4$ centers are
at distance at most $2\sqrt{H\tau}$, so the estimate holds
for any choice of center, after changing $C$.
Since $\textstyle\int_F\omega(T-\tau)=4\pi\tau$, integration implies
\[
 \tau I(\tau)\le C
 \exp\!\left(-\frac{d_{g_{(T-\tau)}}(z_\tau,F)^2}{C\tau}\right).
\]
The lower bound along $\tau_i$ proves the assertion.
\end{proof}
Now we exclude the shrinking cylinder as the tangent flow.
\begin{theorem}\label{prop:visibility-bccd}
The tangent flow $X$ is the BCCD shrinker $\operatorname{Bl}_p(\mathbb{P}^1\times \mathbb{C})$.
\end{theorem}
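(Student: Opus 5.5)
We already know that every tangent flow based at $x_0\in F_q$ is either the shrinking cylinder $\PP^1\times\C$ or the BCCD shrinker $\Bl_p(\PP^1\times\C)$. The plan has two steps. First, along the special scales $\tau_i$ of Lemma~\ref{lem:visible-subsequence}, show that the limit cannot be the cylinder. Second, transfer this to arbitrary scales by an entropy argument.

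\textbf{Step 1: special scales.} Pass to a subsequence of the $\tau_i$ from Lemma~\ref{lem:visible-subsequence} so that the rescaled flows $g_{i,t}=\tau_i^{-1}g(T+\tau_i t)$ converge to a smooth tangent shrinker $X$. Since $X$ is smooth, the convergence is smooth on compact sets. Moreover, $H_4$-centers converge to points within bounded distance of the limit basepoint $x_\infty$. By the lemma, at time $t=-1$ the singular fiber $F_q$ meets the ball $B_{g_{i,-1}}(z_{\tau_i},A)$. Each component $C_1,C_2$ of $F_q$ has area $\int_{C_j}\omega_i\le 4\pi$ in the rescaled metric, since $\int_{F_q}\omega_i=4\pi$ and $\omega_i$ is positive on curves.

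I would argue as in Lemma~\ref{lem:curves}. By monotonicity, bounded area and bounded geometry of $X$ force $C_1$ and $C_2$ to have bounded diameter. So they lie in a fixed compact set $K\subset X$ through $\psi_i$, and they converge as currents to a compact holomorphic 1-cycle $Z\subset X$ of area $4\pi$.

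Now suppose $X=\PP^1\times\C$. Compact curves in the cylinder are fibers, so $Z=\PP^1\times\{w\}$ with multiplicity one, because its area equals the fiber area $4\pi$. A tubular neighborhood $N$ of $Z$ has $H_2(N)=\Z$, generated by the fiber, with self-intersection $0$. For large $i$, $\psi_i(N)\subset V$ contains $F_q=C_1+C_2$. Using the homological identity $[F_q]=\psi_{i*}[Z]$ in $H_2(\psi_i(N))$, the class $[C_1]\in H_2(\psi_i(N))\cong\Z$ would be a multiple of the fiber class. It would therefore have self-intersection $0$, contradicting $C_1^2=-1$.

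More cleanly, I would use the rank count: $C_1$ and $C_2$ give two classes in $H_2(\psi_i(N);\Q)$, a one-dimensional space. Hence $[C_1]=\lambda[C_2]$. Then $C_1\cdot C_2=1$ and $C_1^2=-1$ give $-1=\lambda$ and $1=\lambda\cdot(-1)$, which is consistent. So instead I would derive the contradiction from the self-intersections: $C_1^2=\lambda^2C_2^2$ in a space whose form is $0$ forces $C_1^2=0\neq-1$. This yields the BCCD shrinker along $\tau_i$.

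\textbf{Step 2: all scales.} By Bamler's results, the pointed Nash entropy $\mathcal N_{x_0,T}(\tau)$ is monotone and has a limit $\mathcal N_{x_0,T}(0)$. Every tangent flow at $(x_0,T)$ is a metric soliton whose entropy equals this common limit. The scales $\tau_i$ give $\mathcal N_{x_0,T}(0)=\mu(\mathrm{BCCD})$. The cylinder has entropy $\mu(\PP^1\times\C)$, and the two values differ. I would cite the explicit value for the cylinder and the fact that the BCCD shrinker is not the cylinder, with entropy strictly lower by a rigidity/gap result. Therefore any tangent flow along any sequence $\tau_i'\searrow0$ has BCCD entropy and cannot be the cylinder, so it is BCCD.

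\textbf{Main obstacle.} The hardest part is the compactness in Step 1: showing that the components of $F_q$ stay in a bounded region of the rescaled metric and converge smoothly enough that their homology classes and self-intersections transfer to $X$. Knowing only that one point of $F_q$ is close to the center is not enough, so the monotonicity and area bound must be used carefully. I would try the current-compactness argument of Lemma~\ref{lem:curves}, combined with connectedness of $F_q$ and a uniform lower area density in bounded geometry, to rule out escape to infinity.
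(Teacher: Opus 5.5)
Your proof is correct, but Step 1 takes a genuinely different route from the paper. Step 2 is the paper's entropy argument; the paper, like you, simply asserts $\mathcal N_{\mathrm{BCCD}}\neq\mathcal N_{\mathrm{cyl}}$, and only the inequality, not its direction, is needed.

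\textbf{How the paper does Step 1.} It never controls the whole fiber $F_q$. It uses only one point $w_i\in F_q$ at bounded distance from $\psi_i(o)$ and takes the $\PP^1$-fiber $C$ of the cylinder through the limit point. It deforms $C$ by \cite[Corollary 2.3]{CCD} into embedded $J_M$-holomorphic spheres $D_i\ni w_i$ with $D_i^2=0$ and area $O(\tau_i)$. The Schwarz estimate then forces $p|_{D_i}$ to be constant, so $D_i\in\{C_1,C_2\}$, contradicting $C_j^2=-1$.

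\textbf{How you do Step 1.} You go the other way and transplant $F_q$ into the limit. The area identity $\int_{F_q}\omega_i=4\pi$, connectedness and monotonicity confine $F_q$ to $\psi_i$ of a bounded region of $\PP^1\times\C$, where every second homology class has square zero. This avoids the deformation theory of pseudo-holomorphic spheres and the Schwarz lemma. The price is exactly the confinement step, which the paper's one-point argument sidesteps entirely.

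\textbf{Two points on execution.}

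First, argue the confinement on the $X$ side, not through the $g_i(-1)$-diameter of $F_q$. Distances in $M$ may shortcut through regions not covered by $\psi_i$. If $F_q\not\subset\psi_i(B_X(o,R))$, connectedness produces points of $F_q$ on $\psi_i(\partial B_X(o,r))$ for every intermediate $r$. Choosing radii spaced by a fixed multiple of $r_0$ gives disjoint small balls in the controlled region. Each carries area at least $c_0$ by monotonicity, which bounds $R$ independently of $i$.

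Second, once $F_q\subset\psi_i(\PP^1\times D_R)$ you are already finished. $H_2(\PP^1\times D_R)$ is generated by a fiber of square zero, and $C_1^2$ is the normal Euler number, computed inside this neighborhood. So the current limit $Z$, its multiplicity, and the detour through $[C_1]=\lambda[C_2]$ (which is muddled as written) are not needed.
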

\begin{proof}
We first consider a special tangent flow, we choose $\tau_i \searrow 0$ which satisfies lemma \ref{lem:visible-subsequence} and set $g_i(t)=\tau_i^{-1}g(T+\tau_i t)$. After passing to a subsequence, let $(X,g_X(t),J_X)$ be the
corresponding tangent shrinker which is shrinking cylinder or BCCD shrinker.
Let $\psi_i$ be the regular convergence maps at time $-1$,
and let $(z_i,T-\tau_i)$ be the $H_4$ center of $(x_0,T)$.
Lemma \ref{lem:visible-subsequence} implies $d_{g_i(-1)}(z_i,F)\le A$. 

Suppose that $X$ is the shrinking cylinder.
Fix $o\in X$. Smooth convergence of the rescaled kernel gives
\[
 \liminf_{i\to\infty}
 \tau_i^2 K(x_0,T;\psi_i(o),T-\tau_i)>0.
\]
and \cite[Theorem 7.2]{Bam20a} implies 
\[
 \tau_i^2 K(x_0,T;\psi_i(o),T-\tau_i)
 \le C\exp\!\left(
 -\frac{d_{g_i(-1)}(z_i,\psi_i(o))^2}{C}
 \right).
\]
Hence $d_{g_i(-1)}(z_i,\psi_i(o))\le B$ for some $B<\infty$.
Choose $w_i\in F$ with $d_{g_i(-1)}(z_i,w_i)\le A$.
Then $d_{g_i(-1)}(\psi_i(o),w_i)\le A+B$. By the smooth converge, we know $v_i:=\psi_i^{-1}(w_i)
 \in\overline B_{g_X(-1)}(o,2(A+B))$. After passing to a subsequence,
$v_i\to v_\infty$.

Replace $\psi_i$ by $\psi_i\circ\chi_i$, where
$\chi_i(v_\infty)=v_i$ and $\chi_i\to\mathrm{id}$ smoothly.
Then $\psi_i(v_\infty)=w_i$, and smooth convergence is unchanged.
Let $C$ be the $\mathbb P^1$-fiber through $v_\infty$. It has self-intersection zero.
Apply 
\cite[Corollary 2.3]{CCD} on a fixed neighborhood of $C$, it gives embedded
$J$-holomorphic spheres $D_i\subset M$ such that $w_i\in D_i$ and $D_i^2=0$, so
\[
 \tau_i^{-1}\int_{D_i}\omega(T-\tau_i)
 =
 \int_C\psi_i^*\!\left(\tau_i^{-1}\omega(T-\tau_i)\right)
 \longrightarrow \int_C\omega_X(-1).
\]
In particular, $\textstyle\int_{D_i}\omega(T-\tau_i)=O(\tau_i)$. The Schwarz estimate now gives
\[
 0\le\int_{D_i}p^*\omega_\Sigma
 \le C\int_{D_i}\omega(T-\tau_i)
 \longrightarrow0.
\]
If $p|_{D_i}$ were nonconstant, it would have positive integer
degree onto $\Sigma$, so
\[
 \int_{D_i}p^*\omega_\Sigma
 =\deg(p|_{D_i})\int_\Sigma\omega_\Sigma >0.
\]
Thus $p|_{D_i}$ is constant for large $i$.
Since $w_i\in D_i\cap F_q$, this constant is $q$, so
$D_i\subset F_q$.
The irreducible curve $D_i$ must equal $C_1$ or $C_2$.
Both have self-intersection $-1$, contradicting $D_i^2=0$. So the tangent shrinker $X$ is the BCCD shrinker. Finally, entropy convergence gives $\lim\nolimits_{\tau\searrow0}\mathcal N_{x_0,T}(\tau)
 =\mathcal{N}_{\mathrm{BCCD}}$. Since $\mathcal{N}_{\mathrm{BCCD}}\neq\mathcal{N}_{\mathrm{cyl}}$, hence $X$ is the BCCD shrinker for all tangent flow.
\end{proof}
\section{The Type I estimate}
\begin{lemma}
\label{lem:typeI-exterior-link}
Let $L\cong S^3/\Gamma$ be the link of an isolated K\"ahler
orbifold point, and let $U$ be an annular neighborhood of $L$.
Suppose there are embeddings $\psi_j:U\to V\setminus F_q$
such that the pulled-back K\"ahler forms and complex structures
converge smoothly to those on $U$. Then $\Gamma$ is trivial.
\end{lemma}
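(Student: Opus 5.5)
The plan is to reproduce the topological mechanism of Lemmas~\ref{lem:W-topology}, \ref{lem:exact} and \ref{lem:replacement}. This time the link sits in the \emph{complement} of the singular fiber, where the ambient intersection form vanishes identically. That should force the compact side bounded by the link to be a rational homology ball, which is incompatible with a nontrivial quotient singularity.

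\textbf{Step 1: lift the link to a simply connected model.} Since $D$ contains no other singular value, $V\setminus F_q\cong (D\setminus\{q\})\times\PP^1$. This has $\pi_1=\Z$ and $H_2=\Z[G]$ with $G\cdot G=0$. Because $\pi_1(L)=\Gamma$ is finite, every homomorphism $\Gamma\to\Z$ is trivial, so each $\psi_j$ lifts to the universal cover $\widetilde V\cong\mathbb H\times\PP^1$. I will replace $V\setminus F_q$ by $\widetilde V$ and pull back $\omega_j$ and $J$; the convergence hypotheses are local, so they are unchanged. The space $\widetilde V$ is simply connected and has one end. Therefore, exactly as in Lemma~\ref{lem:W-topology}, the lifted link $L_j$ separates it and bounds a compact region $W_j$. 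Mayer--Vietoris across the rational homology sphere $L_j$ embeds $H_2(W_j;\Q)$ as a nondegenerate subspace of $H_2(\widetilde V;\Q)=\Q[G]$. The form on $\Q[G]$ is identically zero, so $b_2(W_j)=0$, and as before $H_1(W_j;\Q)=H_3(W_j;\Q)=0$. Thus $W_j$ is a rational homology ball with boundary $S^3/\Gamma$. In particular $c_1(W_j;\R)=0$ and $[\omega_j|_{W_j}]=0$ automatically, without the entropy-scale argument of Lemma~\ref{lem:exact}. The Stokes identity \eqref{eq:relative-volume} then reads $\int_{W_j}\omega_j^2=\pm B_{r,j}$, which fixes the boundary orientation exactly as there.

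\textbf{Step 2: run the replacement count.} I will close up $\widetilde V$ by compactifying $\mathbb H\times\PP^1\subset\C\times\PP^1\subset\PP^1\times\PP^1$. Only the complement of $\operatorname{int}W_j$ is altered, and the almost-complex structure can be homotoped there. I then glue in the minimal resolution neighborhood $N$ of $\C^2/\Gamma$ in place of $W_j$, along the collar where $\psi_j^*J\to J_X$. Repeating the Chern-number computation of Lemma~\ref{lem:replacement} ($c_1^2=2\chi+3\sigma$, together with the orthogonal Mayer--Vietoris splittings) with $r=b_2(W_j)=0$ gives
\[
0=\ell+A^2,\qquad A^2=\sum_\alpha a_\alpha(b_\alpha-2).
\]
If $\Gamma\subset SU(2)$, then every $b_\alpha=2$, so $A=0$. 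Hence $\ell=0$, which means the resolution is trivial and $\Gamma=1$.

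\textbf{Step 3 (main obstacle): the non-Gorenstein case.} When some $b_\alpha\ge3$, the identity $\ell=-\sum_\alpha a_\alpha(b_\alpha-2)$ together with the klt bounds $-1<a_\alpha\le0$ does not obviously fail by arithmetic alone. I would first try to sharpen the inequality using the explicit discrepancies of cyclic and non-cyclic quotient chains, namely $a_\alpha$ computed from the continued fraction. If that fails, I would use symplectic input instead. By Step 1, $(W_j,\omega_j)$ is an exact symplectic filling of $S^3/\Gamma$ with its canonical contact structure (the boundary is convex because $B_{r,j}>0$), and it is a rational homology ball with $c_1=0$ over $\R$. The classification of rational homology ball fillings of quotient-singularity links (Lisca, Bhupal--Ono) restricts $\Gamma$ to specific lens spaces $L(p^2,pq-1)$ and a few exceptional families. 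For those, the canonical class of the filling is not torsion, and this contradicts $c_1(W_j;\R)=0$. In either route, $\Gamma$ must be trivial.
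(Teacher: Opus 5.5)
\textbf{Steps 1 and 2 are sound.} They agree in substance with the paper. The paper does not pass to the universal cover. It observes that $F_q$ cannot lie in $W_j$, because $[F_q]\neq 0$, $F_q^2=0$ and $Q_{W_j}<0$. So $W_j\subset V\setminus F_q$, where the intersection form vanishes, and $b_2(W_j)=0$. Your lift reaches the same conclusion.

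\textbf{The gap is Step 3, and neither of your routes can close it.} The relation $\ell+A^2=0$ really is satisfied by non-Gorenstein quotients:
\begin{itemize}
\item For $\frac14(1,1)$ there is one $(-4)$-curve with $a=-\frac12$, so $A^2=-1=-\ell$.
\item For $\frac19(1,2)$, with chain $[5,2]$, one finds $(a_1,a_2)=(-\frac23,-\frac13)$ and $A^2=-2=-\ell$.
\item The same holds for every Wahl singularity $\frac1{p^2}(1,pq-1)$.
\end{itemize}
This is not an accident of arithmetic. The links $L(p^2,pq-1)$ do bound rational homology balls: the Milnor fibres of the $\mathbb Q$-Gorenstein smoothings. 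These are Stein fillings of the canonical contact structure, and they are exactly the rational-ball fillings that appear in the Lisca and Bhupal--Ono classifications. So the classification confirms that such fillings exist rather than producing a contradiction.

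Your assertion that ``the canonical class of the filling is not torsion'' is also false. A rational homology ball has $H^2(\,\cdot\,;\mathbb R)=0$, so every class, $c_1$ included, is torsion. No intrinsic property of $(W_j,\omega_j,J)$ distinguishes it from such a filling. The missing input has to come from how $W_j$ sits in the ambient manifold.

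\textbf{The missing input is the one the paper uses.} The link $L_j$ lies in $V\setminus F_q\cong\PP^1\times\Delta^*$. This manifold embeds smoothly in $\mathbb R^4$, and so does your $\widetilde V\cong\mathbb R^2\times S^2$. Hence $S^3/\Gamma$ embeds smoothly in $\mathbb R^4$, and \cite[Proposition 2.14(i)]{CHM25} gives $\Gamma\subset SU(2)$. For cyclic $\Gamma$ this is already Hantzsche's obstruction: a rational homology sphere embedded in $S^4$ has $H_1\cong G\oplus G$. That rules out every nontrivial lens space, in particular all the Wahl links above. Once $\Gamma\subset SU(2)$, the minimal resolution is crepant and $A=0$. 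Your count $0=b_2(W_j)=\ell+A^2=\ell$ then finishes the proof exactly as in the paper.

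So replace Step 3 by this embedding argument; Steps 1 and 2 can stay as they are.
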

\begin{proof}
    We use the same notation as in Section 3.2. Write $L\cong S^3/\Gamma$ for the link, set $L_j=\psi_j(L)$,
and let $W_j$ be the compact region bounded by $L_j$ in $V$.
By Lemma~\ref{lem:W-topology}, $Q_{W_j}$ is negative definite
and $b_1(W_j)=b_3(W_j)=0$.

Since $F_q$ is connected and disjoint from $L_j$, it lies
entirely inside or outside $W_j$. The first case is impossible:
$[F_q]\ne0$ in $H_2(V;\mathbb Q)$ and $F_q^2=0$, contrary
to the negative definiteness of $Q_{W_j}$.
Thus $W_j\subset V\setminus F_q$. Since
$Q_{V\setminus F_q}=0$, the negative definiteness of $Q_{W_j}$ forces $b_2(W_j)=0$. Hence $c_1(W_j;\mathbb R)=0$ and $[\omega(s_j)|_{W_j}]=0$.

The link $L_j$ embeds in
$V\setminus F_q\cong\mathbb P^1\times\Delta^*$, which embeds
smoothly in $\mathbb R^4$. By
\cite[Proposition 2.14(i)]{CHM25}, $\Gamma\subset SU(2)$, so the minimal resolution is a crepant resolution i.e. $K_N=\rho^*K_{B/\Gamma}$ ($A=0$). Let $\ell$ be the number of exceptional curves. The convergence of complex structures and the boundary
orientation allow the replacement in
Lemma~\ref{lem:replacement}. Its Chern-number calculation gives $b_2(W_j)=\ell+A^2$. Thus $\ell=0$, and $o$ is smooth.
\end{proof}

Using the topological lemma above and a contradiction argument, we extend the Type I estimate established in \cite{XZ} to the generalized form below. We let \(\mathcal{F}\) stand for the union of singular fibers of the $M$.
\begin{proposition}
\label{prop:typeI-exterior}
For any $\varepsilon>0$, there exists $C_\varepsilon<\infty$
such that
\[
|\operatorname{Rm}(g(t))|(x)\le \frac{C_\varepsilon}{T-t}
\]
for every $(x,t)\in M\times [0,T)$ satisfying $P^{*-}(x,t;\varepsilon\sqrt{T-t})
\cap(\mathcal F\times[0,T))=\varnothing$.

\end{proposition}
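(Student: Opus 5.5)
We argue by contradiction, following the secondary tangent-flow (point-picking) argument of \cite{BCCD}. Suppose the estimate fails for some $\varepsilon>0$. Then there are $(x_j,t_j)$ with $t_j\nearrow T$, $P^{*-}(x_j,t_j;\varepsilon\sqrt{T-t_j})\cap(\mathcal F\times[0,T))=\varnothing$, and $(T-t_j)|\Rm|(x_j,t_j)\to\infty$. We set $\tau_j=T-t_j$ and rescale $\tilde g_j(t)=\tau_j^{-1}g(t_j+\tau_j t)$, so the base point sits at time $0$. The rescaled flows exist on $[-\tau_j^{-1}t_j,1)$, and $P^{*-}(x_j,0;\varepsilon)$ avoids the rescaled singular fibers. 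The Nash entropy is uniformly bounded below by the noncollapsing estimate of \cite{Bam20a}, since $T<\infty$.

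Bamler's compactness \cite{Bam23,Bam20b}, in its Kähler form \cite[Theorem 2.8]{CHM25}, gives $\mathbb F$-subconvergence of $(M,\tilde g_j,\nu_{x_j,0;\cdot})$ to a limit metric flow on $(-\infty,0]$ with a regular--singular decomposition. Because $|\Rm|(x_j,0)\to\infty$, either $x_j$ converges into the singular set, or the curvature blows up nearby within a bounded $P^*$-neighborhood. In either case the limit is not smooth near the base point. We then pass to a secondary tangent flow at a singular point of the limit, as in \cite{BCCD}. Bamler's partial regularity in the Kähler surface setting \cite{CHM25,HZ26} produces an orbifold Kähler--Ricci shrinker with isolated singularities, which is nonflat if the singular point is genuine.

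The key step is showing this secondary limit is smooth. Around any orbifold point of the primary limit, the annular neighborhood of its link lies inside a bounded $P^*$-neighborhood of the base point. By Bamler's $P^*$-comparison estimates \cite[Section 9]{Bam20a}, after shrinking $\varepsilon$ by a constant factor, its images under the convergence maps are contained in the $P^{*-}(x_j,0;\varepsilon)$ region. That region misses $\mathcal F$, so these images lie in $V\setminus F_q$, or in regular product regions away from every $p^{-1}(\Delta)$. Lemma~\ref{lem:typeI-exterior-link} then forces $\Gamma$ to be trivial. Hence the primary limit is smooth near the base, and the curvature blowup must come from a smooth region. Any nonflat smooth shrinker arising here lives in a region diffeomorphic to $D\times\PP^1$ with $Q=0$. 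The argument of Section 3 then applies: Lemma~\ref{lem:curves}, the $\PP^1$-fibration structure over $\mathbb A^1$, and \cite{LW26} leave only the cylinder $\PP^1\times\C$. The BCCD shrinker is excluded because its $(-1)$-curves would deform, via \cite{CCD}, to $(-1)$-spheres in $V\setminus F_q$, where $Q_V$ vanishes.

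To get the contradiction, we show the cylinder has bounded curvature. Smooth convergence at scale $\tau_j$ then bounds $\tau_j|\Rm|(x_j,t_j)$, contradicting the blowup. Alternatively, following \cite{XZ}, regularity of the limit near the base point already yields the bound via Bamler's $\epsilon$-regularity. The main obstacle is the localization. We must verify that every orbifold point, and every compact curve used in the topological contradiction, stays inside a $P^*$-neighborhood that avoids $\mathcal F$. This needs careful bookkeeping of the $W_1$-distances under rescaling between the primary and secondary blowups. I would handle it by choosing the secondary scale much smaller than $\varepsilon^2$ and invoking the monotonicity of $P^*$-neighborhoods from \cite[Proposition 9.4]{Bam20a}.
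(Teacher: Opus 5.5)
There is a genuine gap in the last step. Your secondary tangent flow $X'$ lives at scale $\delta_j=\lambda_j^2\tau_j\ll\tau_j$.

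\textbf{Trivial orbifold groups do not give regularity.} Proving via Lemma~\ref{lem:typeI-exterior-link} that the orbifold groups of $X'$ are trivial does \emph{not} make the primary limit regular at the base point. That point is regular only if its tangent flow is flat $\R^4$. A smooth nonflat tangent shrinker such as $\PP^1\times\C$ still means the point is singular: its pointed Nash entropy stays bounded away from $0$, so $\epsilon$-regularity is unavailable.

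\textbf{Bounded curvature of the cylinder gives nothing at scale $\tau_j$.} A cylinder appearing at scale $\delta_j$ only gives $|\Rm|\lesssim\delta_j^{-1}$ near $\psi_j(K)$ at time $s_j=t_j-\delta_j$. This is fully compatible with $\tau_j|\Rm|(x_j,t_j)\to\infty$. There is no smooth convergence at scale $\tau_j$ to appeal to, so your argument stops exactly at the case it must exclude.

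\textbf{The reduction to the cylinder is also unavailable.} Lemma~\ref{lem:curves} and the fibration over $\A^1$ rest on regular fibers having area $4\pi$ in the Type~I rescaling. In $\delta_j^{-1}g(s_j)$ these fibers have area $4\pi(T-s_j)/\delta_j\to\infty$. So the mass bound that produces compact curves through every point is lost.

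\textbf{The missing idea: quantized curve areas.} Since $H^2(D;\R)=0$, the local cohomology formula gives $[\omega(s)|_V]=2\pi(T-s)c_1(M)|_V$. Once $X'$ is known to be smooth and nonflat, \cite[Theorem 1.1]{LW26} supplies a compact holomorphic curve $C\subset X'$. For large $j$ one has $\psi_j(C)\subset V$. Set $n_j=\langle c_1(M)|_V,(\psi_j)_*[C]\rangle\in\Z$. Then
\[
2\pi n_j\,\frac{T-s_j}{\delta_j}=\int_C\delta_j^{-1}\psi_j^*\omega(s_j)\longrightarrow\int_C\omega'>0,
\qquad \frac{T-s_j}{\delta_j}=1+\lambda_j^{-2}\to\infty .
\]
Positivity forces $n_j\ge1$ for large $j$, and then the left side diverges, which is a contradiction.

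This single argument rules out every nonflat secondary shrinker at once, the cylinder and BCCD included. No classification at the secondary scale is needed. The contradiction is with the nontriviality of the tangent flow at the base point, which comes from $\epsilon$-regularity.

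\textbf{Localization is routine.} The step you flag as the main obstacle needs no shrinking of $\varepsilon$, which is fixed anyway.
\begin{itemize}
\item The heat-kernel lower bound on $K\Subset X'_{\reg}$ and the Gaussian upper bound place $(\psi_j(y),s_j)$ within $W_1$-distance $C_K\sqrt{\delta_j}\ll\varepsilon\sqrt{\tau_j}$ of the kernel.
\item Monotonicity of $d_{W_1}$ then puts $(\psi_j(y),s_j)$ in the given $P^{*-}$ neighborhood.
\item The Schwarz estimate with \cite[Lemma 2.10]{CHM25} keeps $\psi_j(K)$ inside a fixed $V$, after using \cite[Corollary 1.2]{XZ} to get $p(x_j)\to q\in\Delta$.
\end{itemize}
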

\begin{proof}
Suppose otherwise, take $t_i\nearrow T$, $\tau_i=T-t_i$, with
\begin{equation}\label{eq:typeI-exterior-bad}
 \tau_i|\operatorname{Rm}(g(t_i))|(x_i)\to\infty,\qquad
 P^{*-}(x_i,t_i;\varepsilon\sqrt{\tau_i})
 \cap(\mathcal F\times[0,T))=\varnothing .
\end{equation}
The regular-fiber estimate \cite[Corollary 1.2]{XZ} implies,
after passing to a subsequence, $p(x_i)\to q\in\Delta$.
Fix $V=p^{-1}(D)$ about $F_q$. Rescale by
$g_i(s)=\tau_i^{-1}g(t_i+\tau_i s)$, by Bamler's compactness theory, we have
\begin{align}\label{one}
      \big(M,(g_{i}(s))_{s\in(-\tau_i^{-1}t_i,0]},(\nu_{x_i,0;t})_{t\in(-\tau_i^{-1}t_i,0]}\big)
\xrightarrow[\,i\to\infty\,]{ \mathbb F,\mathfrak C }
\big(\mathcal X,(\nu_{x_\infty;t})_{s\in(-\infty,0]}\big),
\end{align}
where $(\mathcal X,(\nu_{x_\infty;t})_{s\in(-\infty,0]})$ is smooth away from away a codimension $4$ singular set $\mathcal{S}$ and we set $\mathcal{X}=\mathcal{R}\cup\mathcal{S}$. By the $\epsilon$-regularity theorem \cite[Theorem 10.2]{Bam20a} and (\ref{eq:typeI-exterior-bad}), we know the flow is singular at $x_{\infty}$, so the metric flow $\mathcal X$ has a nontrivial  tangent flow at $x_{\infty}$ \cite[Theorem 2.6, 2.11]{Bam20b}, more precisely, there is a sequence $\lambda_j\to 0$ such that 
\begin{align}\label{two}
    \lambda_j^{-1}
\bigl(\mathcal X,(\nu_{x_\infty;t})_{t\le0}\bigr)
\xrightarrow[\,j\to\infty\,]{ \mathbb F,\mathfrak C }
\bigl(\mathcal X',(\nu_{x'_\infty;t})_{t\le0}\bigr),
\end{align}
this limit is a metric soliton and we set its time slice at $-1$ time be the $(X',g',J',f)$ which is an orbifold Kähler-Ricci shrinker with isolated orbifold singularities. The smooth convergence in (\ref{two}) implies that the Cheeger-Gromov embedding maps $\varphi_j:U_j\to \mathcal{R}_{-\lambda_j^2}$ between the regular part of $X'$ and $\mathcal{X}$, where $\textstyle \cup_j U_j$ represent the exhaustion of $X'_{reg}$. (\ref{one}) and (\ref{two}) imply that we can choose a diagonal sequence $i(j)\to\infty$ for the 
regular convergence, and denote the corresponding embeddings by
$\psi_j:U_j\to M$. Set
\[
\delta_j=\lambda_j^2\tau_{i(j)},
\qquad
s_j=t_{i(j)}-\delta_j.
\]
Then
\[
\delta_j^{-1}\psi_j^*g(s_j)
=\lambda_j^{-2}\psi_j^*g_{i(j)}(-\lambda_j^2)
\longrightarrow g' \qquad \text{in} \,\,C^\infty_{\mathrm{loc}}(X'_{\mathrm{reg}}).
\]

Fix $K\Subset X'_{\mathrm{reg}}$ and let $z_j$ be a $H_4$-center
of $(x_{i(j)},t_{i(j)})$ at time $s_j$. The smooth convergence on $K$ gives lower bound of the heat kernel and \cite[Theorem 7.2]{Bam20a} implies
\begin{align*}
    d_{g(s_j)}(\psi_j(y),z_j)\le C_K\sqrt{\delta_j}
\qquad y\in K.
\end{align*}
The definition of $H_4$-center and Cauchy-Schwarz inequality implies
\begin{align*}
d_{W_1}^{g(s_j)}
\bigl(\nu_{x_{i(j)},t_{i(j)};s_j},\delta_{\psi_j(y)}\bigr)
\le d_{g(s_j)}(\psi_j(y),z_j)+\sqrt{H_4\delta_j}
\le C_K\sqrt{\delta_j}.
\end{align*}
Set $b_j=t_{i(j)}-\varepsilon^2\tau_{i(j)}<s_j$, the monotonicity of $d_{W_1}^{g_t}$ implies 
\[
 d_{W_1}^{g(b_j)}
 \bigl(\nu_{x_{i(j)},t_{i(j)};b_j},
       \nu_{\psi_j(y),s_j;b_j}\bigr)
 \le C_K\sqrt{\delta_j}<\varepsilon\sqrt{\tau_{i(j)}}.
\]
Thus $(\psi_j(y),s_j)$ lies in the neighborhood in
\eqref{eq:typeI-exterior-bad}, so $\psi_j(K) \cap \mathcal F=\varnothing$.
The Schwarz estimate and \cite[Lemma 2.10]{CHM25} give
\begin{align*}
d_{\omega_\Sigma}\bigl(p(\psi_j(y)),p(x_{i(j)})\bigr)
\le d_{\omega_\Sigma}\bigl(p(\psi_j(y)),p(z_j)\bigr)
   +d_{\omega_\Sigma}\bigl(p(z_j),p(x_{i(j)})\bigr)
\le C_K\sqrt{\delta_j}
\qquad y\in K.
\end{align*}
It follows that $\psi_j(K)\subset V\setminus F_q$ for large $j$. At any singular point of $X'$, choose a small link and an
annular neighborhood whose closure lies in $X'_{\mathrm{reg}}$, Lemma~\ref{lem:typeI-exterior-link} shows that $X'$ is smooth. We know $X'$ is a nontrivial shrinker. By \cite[Theorem 1.1]{LW26}, $X'$ contains a
smooth compact holomorphic curve $C$. For sufficiently large $j$,
$\psi_j(C)\subset V$. Set $\beta_j=(\psi_j)_*[C]\in H_2(V;\mathbb Z)$ and
$n_j=\langle c_1(M)|_V,\beta_j\rangle\in\mathbb Z$. Recall $[\omega(s_j)|_V]= (T-s_j)c_1(M)|_V$, so
\[
{(T-s_j)}{\delta_j}^{-1}n_j
=\int_{\beta_j}\delta_j^{-1}\omega(s_j)
=\int_C\delta_j^{-1}\psi_j^*\omega(s_j)
\longrightarrow \int_C\omega'>0.
\]
For large $j$, this implies $n_j\ge1$, but ${(T-s_j)}{\delta_j}^{-1}=1+\lambda_j^{-2}\to \infty$, contradicting the finite limit.
\end{proof}

\subsection{Proof of the main theorem}
\begin{proof}[Proof of the Theorem 1.1]
Suppose otherwise, choose
$\tau_i=T-t_i\searrow0$ with
$\tau_i|\operatorname{Rm}(g(t_i))|(x_i)\to\infty$.
By Proposition~\ref{prop:typeI-exterior}, with $\varepsilon=1$,
the neighborhoods $P^{*-}(x_i,t_i;\sqrt{\tau_i})\cap (\mathcal F\times[0,T))\neq \varnothing$ for large $i$.
After passing to a subsequence, they meet one fixed singular fiber $F_q$.
We consider the rescaled flow $g_i(t)=\tau_i^{-1}g(T+\tau_i t)$,
choose
\begin{equation}\label{eq:typeI-fiber-intersection}
 (y_i,\sigma_i)\in
 (F_q\times[-2,-1])\cap P^{*-}_{g_i}(x_i,-1;1).
\end{equation}
By Theorem \ref{prop:visibility-bccd}, the unique tangent flow $X$ corresponding to any conjugate heat kernel based at $(x_0,T)$ is the BCCD shrinker. Write its canonical flow as $(X,h(t),J_X)$ and fix a compact set $K$ of the central fiber in $X$. \cite[Corollary 2.4]{CCD} and smooth convergence imply that we can obtain $\psi^*_iJ_M$ curves $C_1$ and $C_2$ (deform the central fiber of BCCD shrinker),
their images of $\psi_i$ lie in $V$ by \cite[Lemma 2.11]{CHM25}.
They are distinct, since their local classes are independent,
so $F_q\subset\psi_i(K)$. Because $X$ has bounded curvature, the smooth convergence implies that there exists $K_0<\infty$ such that, for every fixed $R>0$
and sufficiently large $i$,
\begin{align}\label{eq:typeI-captured-cylinders}
    |\operatorname{Rm}(g_i(t))|(x)\le K_0
\quad\text{on } B_{g_i(s)}(y,R)\times[-4,-1],
\end{align}
for every $y\in F_q$ and $s\in[-3,-1]$. By \eqref{eq:typeI-fiber-intersection}, $d_{W_1}^{g_i(-2)}
\bigl(\nu_{x_i,-1;-2},\nu_{y_i,\sigma_i;-2}\bigr)<1$. The definition of $P^*$ gives $(x_i,-1)\in
P^*_{g_i}(y_i,\sigma_i;1,-(\sigma_i+2),-1-\sigma_i)$. Since $\sigma_i+2,-1-\sigma_i\in[0,1]$,
\cite[Corollary 9.6(b)]{Bam20a}, 
and \eqref{eq:typeI-captured-cylinders} give a fixed
$R<\infty$ such that, for all large $i$,
\[
P^*_{g_i}\bigl(y_i,\sigma_i;1,-(\sigma_i+2),-1-\sigma_i\bigr)
\subset B_{g_i(\sigma_i)}(y_i,R)\times[-2,-1].
\]
Hence $|\operatorname{Rm}(g_i(-1))|(x_i)\le K_0$, contradicting the choice of $(x_i,t_i)$.
\end{proof}
Now we identify the tangent flow on any singular fiber $F_q$ for $q\in \Delta$ in the sense of \cite{EMT}.

\begin{proof}[proof of Corollary 1.2]
We know $M$ has Type I curvature bound and set $g_i(t):=\tau_i^{-1}g(T+\tau_i t)$, then by \cite[Theorems 1.4]{EMT}
\begin{align*}
    \bigl(M, g_i(t), J_M, x_0\bigr)_{t\in[-\tau^{-1}_iT,0)} 
\xrightarrow{\text{pointed-}C^\infty\text{-Cheeger--Gromov}} 
\bigl(X, g_\infty(t), J_\infty, x_\infty\bigr)_{t\in(-\infty,0)},
\end{align*}
$X$ is a nonflat Kähler-Ricci shrinker surface with bounded curvature. Furthermore, the smooth convergence together with the divergence of the rescaled volume
forces the limit $X$ to be noncompact. Due to the volume of $M$ at singular time is collapsed, \cite[Theorem 1.1]{LW26} and \cite[Theorem B]{CCD} imply $X$ is the shrinking cylinder or BCCD shrinker.

Suppose that $X$ is the cylinder. After normalization, we set $o:=x_\infty=(p_0,0)$. At time $-1$, let
$C=\mathbb P^1\times\{o\}$ be the holomorphic sphere through $o$ with $C^2=0$. On a fixed neighborhood of $C$, choose pointed
convergence maps $\psi_i$ with $\psi_i(o)=x_0$. By
\cite[Corollary 2.3]{CCD}, $C$ deforms to a
$\psi_i^*J$-holomorphic sphere $C_i$ through $o$, homologous
to $C$ in this neighborhood, so $C_i$ is embedded for large $i$.
Thus $D_i=\psi_i(C_i)$ satisfies $x_0\in D_i$, $D_i\simeq\mathbb P^1$ and $D_i^2=0$. By the same arguments in Theorem \ref{prop:visibility-bccd}, we know $D_i\subset F_q$.

Since $D_i$ is an irreducible compact holomorphic curve contained
in $F_q=C_1+C_2$, it must coincide with one of the irreducible
components $C_1$ or $C_2$. As both components have negative
self-intersection, this contradicts $D_i^2=0$. This excludes the shrinking cylinder.
\end{proof}

\begin{proof}[Proof of Corollary 1.3]
Set $\tau=T-t$ and $\widehat g_t=\tau^{-1}g(t)$.
The Type I estimate and Perelman's noncollapsing theorem give uniformly bounded geometry:
\[
|\operatorname{Rm}(\widehat g_t)|\le K,
\qquad
\operatorname{inj}(M,\widehat g_t)\ge\iota>0.
\]
for any $t \in [0,T)$. Choose $r_0,b>0$ with $r_0<\iota/2$ and $br_0^2<1/2$.
Hessian comparison gives
\[
\nabla^2 f_p\ge(1-br^2)\widehat g_t,
\qquad
|\nabla f_p|\le r
\quad\text{on }B_{\widehat g_t}(p,r),
\]
for $f_p=\tfrac12d_{\widehat g_t}(p,\cdot)^2$ with any $p\in M$ and $0<r\le r_0$.

Let $E$ be an irreducible component of some fiber.
It is a smooth closed holomorphic curve, hence a minimal
surface. The area formula \ref{eq:local-class} gives $\operatorname{Area}_{\widehat g_t}(E)\le4\pi$.
We set $g_E:={\widehat g_t}|_E$. If $\operatorname{diam}_{\widehat g_t}E<r_0$, then
$E\subset B_{\widehat g_t}(p,r_0)$ for any $p\in E$.
Minimality gives
\begin{align*}
  \Delta_{g_E} f_p
=\operatorname{tr}_{g_E}\nabla_M^2f_p+2\langle\nabla^M f_p,\vec H\rangle
\ge 2(1-br_0^2)>0,  
\end{align*}
contradicting $\textstyle\int_E\Delta_{g_E} f_p=0$. Thus $\operatorname{diam}^{ext}_{\widehat g_t}E\ge r_0$. Due to $d_{\widehat g_t}(x,y)\leq d_{g_E}(x,y)$, we know $\operatorname{diam}^{int}_{\widehat g_t}E\ge r_0$.

Fix $p\in E$ and set $V(r)=\operatorname{Area}_{g_E}(B_{g_E}(p,r))$, where $B_{g_E}(p,r)$ is an intrinsic ball.
Since $B_{g_E}(p,r)\subset B_{\widehat g_t}(p,r)$, the divergence
and coarea formulas give
\[
2(1-br^2)V(r)
\le \int_{B_{g_E}(p,r)}\Delta_{g_E} f_p=
\int_{\partial B_E(p,r)}
\langle\nabla_{g_E} f_p,\nu\rangle\,d\mathcal{H}^1
\le r\, \mathcal{H}^1(\partial B_{g_E}(p,r))
=rV'(r),
\]
for almost every $r\le r_0$. As $V(r)/r^2\to\pi$ when $r\searrow0$, integration yields $V(r)\ge\pi r^2e^{-br^2}$. Since $\operatorname{Area}_{\widehat g_t}(E)\le4\pi$, the standard covering argument along a minimal geodesic in $E$ implies $\operatorname{diam}^{\mathrm{int}}_{\widehat g_t}E
\le C_0$.

A singular fiber consists of two such components meeting
at one point. Joining paths through this point gives $\operatorname{diam}^{\mathrm{int}}_{\widehat g_t}F_q
\le2C_0$. The regular-fiber case follows from the estimate for $E$. Rescaling proves the claim.
\end{proof}

\bibliographystyle{alpha}
\bibliography{ref.bib}

@article{TosattiZhang2018FiniteTimeCollapsing,
  author  = {Tosatti, Valentino and Zhang, Yuguang},
  title   = {Finite time collapsing of the {K\"ahler--Ricci} flow on threefolds},
  journal = {Ann. Sc. Norm. Super. Pisa Cl. Sci. (5)},
  volume  = {18},
  number  = {1},
  pages   = {105--118},
  year    = {2018},
  doi     = {10.2422/2036-2145.201508_003},
  eprint  = {1507.08397},
  archivePrefix = {arXiv},
  primaryClass  = {math.DG}
}

@article{JianSongTian2023FiniteTimeSingularities,
  author  = {Jian, Wangjian and Song, Jian and Tian, Gang},
  title   = {Finite time singularities of the {K\"ahler--Ricci} flow},
  year    = {2023},
  journal={arXiv preprint arXiv:2310.07945},
  primaryClass  = {math.DG}
}

@article{BCCD,
  author  = {Bamler, Richard H. and Cifarelli, Charles and Conlon, Ronan J. and Deruelle, Alix},
  title   = {A new complete two-dimensional shrinking gradient {K\"ahler--Ricci} soliton},
  journal = {Geom. Funct. Anal.},
  volume  = {34},
  number  = {2},
  pages   = {377--392},
  year    = {2024},
  doi     = {10.1007/s00039-024-00668-9},
  eprint  = {2206.10785},
  archivePrefix = {arXiv},
  primaryClass  = {math.DG}
}

@article{CCD,
  author  = {Cifarelli, Charles and Conlon, Ronan J. and Deruelle, Alix},
  title   = {On finite time {Type I} singularities of the {K\"ahler--Ricci} flow on compact {K\"ahler} surfaces},
  journal = {J. Eur. Math. Soc.},
  volume  = {28},
  number  = {2},
  pages   = {463--504},
  year    = {2024},
  doi     = {10.4171/JEMS/1485},
  eprint  = {2203.04380},
  archivePrefix = {arXiv},
  primaryClass  = {math.DG}
}

@article{Bam20a,
  title={Entropy and heat kernel bounds on a Ricci flow background},
  author={Bamler, Richard H},
  journal={arXiv preprint arXiv:2008.07093},
  year={2020}
}

@article{Bam23,
  title={Compactness theory of the space of super Ricci flows},
  author={Bamler, Richard H},
  journal={Inventiones mathematicae},
  volume={233},
  number={3},
  pages={1121--1277},
  year={2023},
  publisher={Springer}
}

@article{Bam20b,
  title={Structure theory of non-collapsed limits of Ricci flows},
  author={Bamler, Richard H},
  journal={arXiv preprint arXiv:2009.03243},
  year={2020}
}

@article{CHM25,
  title={Non-collapsed finite time singularities of the Ricci flow on compact Kähler surfaces are of Type {I}},
  author={Conlon, Ronan J and Hallgren, Max and Ma, Zilu},
  journal={arXiv preprint arXiv:2502.19804},
  year={2025}
}

@article{HJ23,
  title={Tangent flows of K{\"a}hler metric flows},
  author={Hallgren, Max and Jian, Wangjian},
  journal={Journal f{\"u}r die reine und angewandte Mathematik (Crelles Journal)},
  volume={2023},
  number={805},
  pages={143--184},
  year={2023},
  publisher={De Gruyter}
}

@article{LW26,
  title={On Kähler Ricci shrinker surfaces},
  author={Li, Yu and Wang, Bing},
  journal={Acta Mathematica},
  volume={236},
  number={1},
  pages={1--50},
  year={2026},
  publisher={International Press of Boston, Inc. Somerville, MA 02143, USA}
}

@article{simon2014introduction,
  title={Introduction to geometric measure theory},
  author={Simon, Leon},
  journal={Tsinghua lectures},
  volume={2},
  number={2},
  pages={3--1},
  year={2014}
}

@article{hamilton1982three,
  title={Three-manifolds with positive Ricci curvature},
  author={Hamilton, Richard S},
  journal={Journal of Differential geometry},
  volume={17},
  number={2},
  pages={255--306},
  year={1982},
  publisher={Lehigh University}
}

@article {EMT,
    AUTHOR = {Enders, Joerg and M\"uller, Reto and Topping, Peter M.},
     TITLE = {On type-{I} singularities in {R}icci flow},
   JOURNAL = {Comm. Anal. Geom.},
  FJOURNAL = {Communications in Analysis and Geometry},
    VOLUME = {19},
      YEAR = {2011},
    NUMBER = {5},
     PAGES = {905--922},
      ISSN = {1019-8385,1944-9992},
   MRCLASS = {53C44},
  MRNUMBER = {2886712},
MRREVIEWER = {Juan-Ru\ Gu},
       DOI = {10.4310/CAG.2011.v19.n5.a4},
       URL = {https://doi.org/10.4310/CAG.2011.v19.n5.a4},
}

@article{Naber,
  title={Noncompact shrinking four solitons with nonnegative curvature},
  author={Naber, Aaron},
  journal={Journal f{\"u}r die reine und angewandte Mathematik (Crelles Journal)},
  volume={2010},
  number={645},
  pages={125},
  year={2010},
  publisher={Walter de Gruyter GmbH}
}

@article{Tia90,
  author  = {Tian, Gang},
  title   = {On {Calabi}'s conjecture for complex surfaces with
             positive first {Chern} class},
  journal = {Inventiones Mathematicae},
  volume  = {101},
  number  = {1},
  pages   = {101--172},
  year    = {1990},
  doi     = {10.1007/BF01231499}
}

@incollection{Koi90,
  author    = {Koiso, Norihito},
  title     = {On rotationally symmetric {Hamilton}'s equation for
               {K\"ahler--Einstein} metrics},
  booktitle = {Recent Topics in Differential and Analytic Geometry},
  editor    = {Ochiai, Takushiro},
  series    = {Advanced Studies in Pure Mathematics},
  volume    = {18-I},
  pages     = {327--337},
  publisher = {Academic Press},
  address   = {Boston, MA},
  year      = {1990},
  doi       = {10.1016/B978-0-12-001018-9.50015-4}
}

@article{WZ04,
  author  = {Wang, Xu-Jia and Zhu, Xiaohua},
  title   = {{K\"ahler--Ricci} solitons on toric manifolds with
             positive first {Chern} class},
  journal = {Advances in Mathematics},
  volume  = {188},
  number  = {1},
  pages   = {87--103},
  year    = {2004},
  doi     = {10.1016/j.aim.2003.09.009}
}

@article{ST08,
  title={Bounding scalar curvature and diameter along the K{\"a}hler Ricci flow (after Perelman)},
  author={Sesum, Natasa and Tian, Gang},
  journal={Journal of the Institute of Mathematics of Jussieu},
  volume={7},
  number={3},
  pages={575--587},
  year={2008},
  publisher={Cambridge University Press}
}

@article{TZ07,
  author  = {Tian, Gang and Zhu, Xiaohua},
  title   = {Convergence of {K\"ahler--Ricci} flow},
  journal = {Journal of the American Mathematical Society},
  volume  = {20},
  number  = {3},
  pages   = {675--699},
  year    = {2007},
  doi     = {10.1090/S0894-0347-06-00552-2}
}

@incollection{BM87,
  author    = {Bando, Shigetoshi and Mabuchi, Toshiki},
  title     = {Uniqueness of {Einstein} {K\"ahler} metrics modulo
               connected group actions},
  booktitle = {Algebraic Geometry, Sendai, 1985},
  series    = {Advanced Studies in Pure Mathematics},
  volume    = {10},
  pages     = {11--40},
  publisher = {North-Holland},
  address   = {Amsterdam},
  year      = {1987},
  doi       = {10.2969/aspm/01010011}
}

@article{TZ00,
  author  = {Tian, Gang and Zhu, Xiaohua},
  title   = {Uniqueness of {K\"ahler--Ricci} solitons},
  journal = {Acta Mathematica},
  volume  = {184},
  number  = {2},
  pages   = {271--305},
  year    = {2000},
  doi     = {10.1007/BF02392630}
}

@incollection{SWL,
  title={An introduction to the K{\"a}hler--Ricci flow},
  author={Song, Jian and Weinkove, Ben},
  booktitle={An introduction to the K{\"a}hler-Ricci flow},
  pages={89--188},
  year={2013},
  publisher={Springer}
}

@article{MM15,
  author = {Carlo Mantegazza and Reto M{\"u}ller},
  title = {Perelman's entropy functional at Type I singularities of the Ricci flow},
  journal = {Journal f{\"u}r die reine und angewandte Mathematik},
  volume = {703},
  year = {2015},
  pages = {173--199},
  mrnumber = {3353546},
  doi = {10.1515/crelle-2013-0039}
}

@article{MT23,
  author  = {Miao, M. and Tian, G.},
  title   = {A Note on Kähler-Ricci Flow on Fano Threefolds},
  journal = {Peking Math. J.},
  year    = {2023}
}

@article{LTZ24,
  author    = {Li, Y. and Tian, G. and Zhu, X.},
  title     = {Singular limits of Kähler-Ricci flow on Fano G-manifolds},
  journal   = {Amer. J. Math.},
  volume    = {146},
  number    = {6},
  pages     = {1651--1690},
  year      = {2024},
  mrnumber  = {MR4855863}
}

@article{XZ26,
  title={Compactness and Rigidity of Complete K$\backslash$" ahler Ricci Shrinkers},
  author={Xu, Tongxin and Zhang, Zhenlei},
  journal={arXiv preprint arXiv:2608.10953},
  year={2026}
}

@article{HZ26,
      title={Singular K\"ahler--Ricci shrinkers and polarized Fano fibrations},
      author={Max Hallgren and Junsheng Zhang},
      year={2026},
      eprint={2605.25213},
      archivePrefix={arXiv},
      primaryClass={math.DG},
      url={https://doi.org/10.48550/arXiv.2605.25213}
}

@article{SZ,
  title={Kähler-Ricci shrinkers and Fano fibrations},
  author={Sun, Song and Zhang, Junsheng},
  journal={arXiv preprint arXiv:2410.09661},
  year={2024}
}

@article{CT,
  title={K{\"a}hler currents and null loci},
  author={Collins, Tristan C and Tosatti, Valentino},
  journal={Inventiones mathematicae},
  volume={202},
  number={3},
  pages={1167--1198},
  year={2015},
  publisher={Springer}
}

@book{HR,
  author    = {Morris W. Hirsch},
  title     = {Differential Topology},
  series    = {Graduate Texts in Mathematics},
  volume    = {33},
  publisher = {Springer-Verlag},
  address   = {New York-Heidelberg},
  year      = {1976},
  mrnumber  = {MR0448362}
}

@misc{De,
  author = {Jean-Pierre Demailly},
  title  = {Complex Analytic and Differential Geometry},
  year   = {2012},
  note   = {Online book, version of June 21, 2012},
  url    = {https://www-fourier.ujf-grenoble.fr/~demailly/source_files/analgeom/agbook.pdf}
}

@book{Beauville,
  title={Complex algebraic surfaces},
  author={Beauville, Arnaud},
  number={34},
  year={1996},
  publisher={Cambridge University Press}
}

@article{XZ,
  title={Finite Time Singularities of Collapsing K$\backslash$" ahler Ricci Flow on Ruled Surfaces},
  author={Xu, Tongxin and Zhang, Zhenlei},
  journal={arXiv preprint arXiv:2609.01442},
  year={2026}
}

@book{Federer,
  author    = {Federer, Herbert},
  title     = {Geometric Measure Theory},
  series    = {Die Grundlehren der mathematischen Wissenschaften},
  volume    = {153},
  publisher = {Springer-Verlag},
  address   = {Berlin},
  year      = {1969}
}

@article{King,
  author  = {King, James R.},
  title   = {The currents defined by analytic varieties},
  journal = {Acta Math.},
  volume  = {127},
  year    = {1971},
  pages   = {185--220},
  doi     = {10.1007/BF02392053}
}

@article{TehYang,
  author  = {Jyh-Haur Teh and Chin-Jui Yang},
  title   = {Real rectifiable currents, holomorphic chains and
             algebraic cycles},
  journal = {Complex Manifolds},
  volume  = {8},
  number  = {1},
  pages   = {274--285},
  year    = {2021},
  doi     = {10.1515/coma-2020-0119}
}

@article{SW,
  author = {Song, Jian and Weinkove, Ben},
  title = {Contracting exceptional divisors by the Kähler-Ricci flow},
  journal = {Duke Math. J.},
  volume = {162},
  number = {2},
  year = {2013},
  pages = {367--415},
  mrnumber = {3018957},
  doi = {10.1215/00127094-1962881},
}

@article{JS,
  title={Finite-Time Singularities of the K\"ahler--Ricci Flow on Fano Bundles},
  author={Wangjian Jian and Jian Song},
  year={2026},
  eprint={2609.02878},
  archivePrefix={arXiv},
  primaryClass={math.DG}
}

@article{CCHZ,
  title={Finite time singularities of the Ricci flow on compact K\"ahler surfaces are of Type I},
  author={Charles Cifarelli and Ronan Conlon and Max Hallgren and Junsheng Zhang},
  year={2026},
  eprint={2609.16733},
  archivePrefix={arXiv},
  primaryClass={math.DG},
  note={Submitted on 15 Sep 2026}
}

@article{CW12,
  title={The K\"ahler Ricci flow on Fano surfaces (I)},
  author={Xiuxiong Chen and Bing Wang},
  journal={Math. Z.},
  volume={270},
  number={1--2},
  year={2012},
  pages={577--587}
}

@article{LZ26,
  title={Gromov-Hausdorff Limits of Noncollapsed K\"ahler-Ricci Flows and the Geometry of Ricci Shrinkers},
  author={Y Li and J Zhang},
  year={2026},
  eprint={2607.25644},
  archivePrefix={arXiv},
  primaryClass={math.DG}
}

@article{HJST,
  author  = {Hallgren, Max and Jian, Wangjian and Song, Jian and Tian, Gang},
  title   = {Geometric regularity of blow-up limits of the
             {K{\"a}hler--R}icci flow},
  journal = {Geometric and Functional Analysis},
  volume  = {34},
  number  = {6},
  pages   = {1899--1972},
  year    = {2024},
  doi     = {10.1007/s00039-024-00694-7}
}

@article{chen2026k,
  title={K$\backslash$" ahler-Ricci Tangent Flows in the Analytic Minimal Model Program},
  author={Chen, Longteng and Hallgren, Max and Lavoyer, Lucas},
  journal={arXiv preprint arXiv:2608.19152},
  year={2026}
}
\section*{Author Information}
\noindent\textbf{Tongxin Xu}$^*$\\
School of Mathematical Sciences, Capital Normal University\\
Email: 2250501032@cnu.edu.cn
\vspace{0.5em}

\noindent\textbf{Zhenlei Zhang}$^\dagger$\\
School of Mathematical Sciences, Capital Normal University\\
Email: zhleigo@aliyun.com\\
\end{document}